\numberwithin{equation}{section} \theoremstyle{plain}
\newtheorem*{thm*}{Main Theorem}
\newtheorem{thm}{Theorem}[section]
\newtheorem*{cor*}{Corollary}
\newtheorem{lem}[thm]{Lemma}
\newtheorem*{lem*}{Lemma}
\newtheorem{prop}[thm]{Proposition}
\newtheorem*{prop*}{Proposition}
\newtheorem*{rem*}{Remark}
\newtheorem*{exam*}{Example}
\newtheorem{defn}[thm]{Definition}
\newtheorem*{defn*}{Definition}
\newtheorem*{conj*}{Conjecture}
\newtheorem*{ack*}{ACKNOWLEDGEMENTS}
\DeclareMathOperator{\Irr}{Irr}
\newcommand{\End}{\mbox{\rm End}}
\newcommand{\Gkdim}{\mbox{\rm GKdim}}
\newcommand{\GL}{\mbox{\rm GL}}
\newcommand{\id}{\mbox{\rm id}}
\newcommand{\Nil}{\mbox{\rm Nilrad~}}
\newcommand{\rad}{\mbox{\rm rad~}}
\newcommand{\sign}{\mbox{\rm sign}}
\newcommand{\supp}{\mbox{\rm supp}}
\newcommand{\topp}{\mbox{\rm top}}
\newcommand{\YD}{\mbox{$\mathcal{YD}$}}
\newcommand{\Bb}{\mbox{$\mathcal{B}$}}
\newcommand{\Oo}{\mbox{$\mathcal{O}$}}
\newcommand{\CC}{\mbox{$\mathbb{C}$}}
\newcommand{\DD}{\mbox{$\mathbb{D}$}}
\newcommand{\NN}{\mbox{$\mathbb{N}$}}
\newcommand{\ZZ}{\mbox{$\mathbb{Z}$}}
\newcommand{\kk}{\mbox{$\mathbbm{k}$}}
\begin{document}

\title{    Finite GK-dimensional Nichols Algebras over the Infinite Dihedral Group }
\author{Yongliang Zhang}
\subjclass[2010]{16T05 (primary), 16T25 (secondary)}
\address{Department of Mathematics, Nanjing University,
Nanjing 210093, China}
\email{zhangyongliang87@smail.nju.edu.cn}
\date{}

\begin{abstract}
 We contribute to the classification of Hopf algebras with finite
 Gelfand-Kirillov dimension, GK-dimension for short, through the study of 
 Nichols algebras over $\DD_{\infty}$, the infinite dihedral group. 
 We find all the irreducible Yetter-Drinfeld modules $V$ over $\DD_{\infty}$, and determine which Nichols algebras $\Bb(V)$ of $V$ are finite GK-dimensional.
 \end{abstract}
\keywords{ Nichols algebras, Hopf algebras, Yetter-Drinfeld modules.}
\maketitle

\section{Introduction}
In recent years, Hopf algebras with finite Gelfand-Kirillov dimension (GK-dimension for short) received considerable attention, see \cite{A5,A6,W1,Wu,Z}. By the lifting method appearing
in \cite{A2}, the following problems arised naturally in \cite{A1}:
\begin{enumerate}
\item When is $\dim \Bb(V)$ finite?
\item When is $\Gkdim \Bb(V)$ finite?
\end{enumerate}
For the first problem, see \cite{A3} when $\Bb(V)$ is of diagonal type.  Let $G$ be a group, and $\CC$ the field of all complex numbers. One problem is to find
all the Nichols algebras $\Bb(V)$ with finite dimension 
for any $V\in{ _G^G}\YD$, the Yetter-Drinfeld modules over the group algebra $\kk G$. The cases when $G$ is a finite
simple group were studied in \cite{A4,A8,A9,A10,A11,A12}.
For the second problem, great progress was acheived when 
$G$ is an abelian group, see \cite{A5,A6}.
In this paper we deal with the infinite dihedral group $\mathbb{D}_{\infty}$, and aim to find all the irreducible Yetter-Drinfeld modules $V$ over $\mathbb{D}_{\infty}$. Among the irreducible $V$  we classify those $V$ with $\Gkdim\Bb(V)<\infty$. 
Precisely, we prove
\begin{thm}\label{main thm}
The only Nichols algebras of the finite dimensional irreducible Yetter-Drinfeld
 modules over $\kk \DD_{\infty}$ with finite GK-dimension,
 up to isomorphism, are those in the following list.
 \begin{enumerate}[\rm (1)]
 \item $ \Bb(\Oo_{h^n}, \rho_{\pm 1})$ for $n\in \NN$.
 \item $ \Bb(\Oo_1, S_0^+)$.
 \item $ \Bb(\Oo_1, S_0^-)$.
 \item $\Bb(\Oo_1, S_{\lambda}^+)$.
 \item $\Bb(\Oo_1, S_{\lambda}^-)$.
 \end{enumerate} 
\end{thm}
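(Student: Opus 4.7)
First I would classify the conjugacy classes of $\DD_{\infty} = \langle g, h \mid g^2 = 1,\ ghg^{-1} = h^{-1}\rangle$. A direct computation yields four types: the singleton $\{1\}$ with centralizer $\DD_{\infty}$; the finite classes $\Oo_{h^n} = \{h^n, h^{-n}\}$ for $n \geq 1$ with centralizer $\langle h\rangle$; and two infinite ``reflection'' classes $\Oo_g$ and $\Oo_{gh}$, each with centralizer of order two. Since an irreducible Yetter--Drinfeld module attached to a pair $(\Oo, \rho)$ has dimension $|\Oo|\cdot\dim\rho$, finite-dimensionality of $V$ forces $\Oo$ to be finite, ruling out $\Oo_g$ and $\Oo_{gh}$ from the start.

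For $\Oo = \{1\}$ the coaction is trivial, the braiding is the ordinary flip, and $\Bb(V) = S(V)$ is the symmetric algebra with $\Gkdim\Bb(V) = \dim V < \infty$ always. Enumerating the finite-dimensional irreducible representations of $\DD_{\infty}$---the one-dimensional characters $S_0^\pm$ and the two-dimensional family $S_\lambda^\pm$ given by $h\mapsto\mathrm{diag}(\lambda, \lambda^{-1})$ and $g\mapsto\pm\left(\begin{smallmatrix}0&1\\1&0\end{smallmatrix}\right)$ for $\lambda\in\kk^{\times}$ with $\lambda^2\neq 1$---would yield items (2)--(5). For $\Oo = \Oo_{h^n}$ with centralizer character $\rho_\lambda: h \mapsto \lambda$, I would write down the induced YD module $V = \kk v_+ \oplus \kk v_-$ (with $\deg v_\pm = h^{\pm n}$, $h\cdot v_\pm = \lambda^{\pm 1}v_\pm$, $g\cdot v_\pm = v_\mp$) and verify that the braiding is diagonal, $c(v_i\otimes v_j) = q_{ij}\, v_j \otimes v_i$, with $q_{++} = q_{--} = \lambda^n$ and $q_{+-} = q_{-+} = \lambda^{-n}$.

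Finally I would appeal to the classification of finite-GK-dimensional rank-$2$ diagonal Nichols algebras (Heckenberger for the finite-dimensional case, together with its extension to the finite-GK setting by Angiono and coauthors). Setting $q = \lambda^n$, the generalized Dynkin diagram $(q,\ q^{-2},\ q)$ has associated Cartan matrix $\left(\begin{smallmatrix}2 & -2 \\ -2 & 2\end{smallmatrix}\right)$ of affine type $\widetilde{A}_1^{(1)}$ whenever $q^2 \neq 1$, which forces $\Gkdim\Bb(V) = \infty$. In the remaining cases $q = \pm 1$, the braiding decouples ($q_{+-}q_{-+}=1$) and $\Bb(V)$ becomes a polynomial algebra or a polynomial-tensor-exterior algebra, with finite GK-dimension; these are precisely $\rho_{\pm 1}$, giving item (1). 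I expect the principal obstacle to be this last step: one must cross-check against Heckenberger's full table of arithmetic root systems to rule out that some exotic root-of-unity value of $q$ lands in a finite-dim diagram outside Cartan type, and one must also verify irredundancy---that no two entries of (1)--(5) yield isomorphic Nichols algebras as braided Hopf algebras---so that the list is both complete and minimal.
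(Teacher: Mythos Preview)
Your overall strategy matches the paper's: enumerate conjugacy classes, describe the irreducible $\kk G^{g}$-modules for each, compute the induced braiding, and for $\Oo_{h^n}$ reduce to the rank-two diagonal classification via the Dynkin diagram $(q,q^{-2},q)$, exactly as in Proposition~\ref{prop:3.1}. Two points of genuine divergence are worth flagging. First, you dispose of $\Oo_g$ and $\Oo_{gh}$ in one line by observing that an infinite class forces $\dim M(\Oo,\rho)=\infty$, so these are excluded by the hypothesis of the theorem; the paper instead devotes Sections~\ref{sec:4}--\ref{sec:5} to writing out the full (infinite-dimensional) module and comodule structures and then invokes Lemma~\ref{lem 2.6} on $\Gkdim\Bb(M)\#\kk G$. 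Your shortcut is valid and considerably shorter. Second, for $\Oo_1$ you classify the simple $\kk\DD_\infty$-modules in the standard way (one-dimensional characters plus the two-dimensional family induced from characters of $\langle h\rangle$), whereas the paper passes through the finite-dimensional quotients $A_\lambda=\kk\DD_\infty/\langle h+h^{-1}-\lambda\rangle$ and analyses their idempotent decompositions; both routes land on the symmetric algebra because the $\Oo_1$-coaction is trivial.

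A caution on bookkeeping: your labelling is inverted relative to the statement you are proving. In the paper's conventions (which the theorem uses), $S_0^{\pm}$ are the \emph{two}-dimensional simples and $S_\lambda^{\pm}$ for $\lambda\neq 0$ are \emph{one}-dimensional, opposite to what you wrote. Also, in your two-dimensional family the choice $g\mapsto \pm\bigl(\begin{smallmatrix}0&1\\1&0\end{smallmatrix}\bigr)$ does not give two distinct irreducibles, since conjugation by $\mathrm{diag}(1,-1)$ identifies them; so your enumeration of items (2)--(5) needs to be reconciled with the paper's $S_\lambda^{\pm}$ before you can claim the list is irredundant.
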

The proof of Theorem \ref{main thm} is given in  Section \ref{sec:3}, \ref{sec:4}, \ref{sec:5} and Section \ref{sec:6}.
\section{Preliminaries}
\subsection{Notations}
Let $\NN$ and $\ZZ$ be the set of positive integers and the set of 
all the integers, respectively. Let $G$ be a group and $\kk$ an algebraic closed field with characteristic zero. In particular, we take $\kk=\CC$, the field
of all complex numbers. For any $g,h\in G$, we write $g\rhd h=ghg^{-1}$, for the adjoint action of $G$ on itself.
The conjugacy class of $g$ in $G$ will be denoted by $\Oo_g$,
and the centralizer of $g$ is denoted by $G^g=\{ h\in G|hg=gh\}$. The center of an algebra $A$ is denoted by $Z(A)$.
\subsection{Nichols algebras}
Let $V$ be a vector space and $c\in \GL(V\otimes V)$. Then the pair
$(V, c)$ is said to be a braided vector space if $c$ is a solution of 
the braid equation
\begin{equation*}
(c\otimes \id)(\id\otimes c)(c\otimes \id)=(\id\otimes c)(c\otimes \id)(\id\otimes c).
\end{equation*}
We call a braided space $(V, c)$ diagonal type if there exists a matrix $\mathfrak{q}=(q_{ij})_{i,j\in \mathbb{I}}$ with $q_{ij}\in \kk^{\times}$ and $q_{ii}\neq 1$ for any $i,j\in \mathbb{I}$ such that
\begin{equation*}
c(x_i\otimes x_j)=q_{ij}x_j\otimes x_i, i,j\in \mathbb{I}.
\end{equation*}
\begin{defn}\cite[Definition 3]{A1}
Let $H$ be a Hopf algebra with bijective antipode $S$. 
A Yetter-Drinfeld
module over $H$ is a vector space $V$ provided with
\begin{enumerate}[$(1)$]
\item a structure of left $H$-module $\mu: H\otimes V\to V$ and
\item a structure of left $H$-comodule $\delta: V\to H\otimes V$, such that
\end{enumerate}
for all $h\in H$ and $v\in V$, the following compatibility condition holds:
\begin{align*}
\delta(h\cdot v)=h_{(1)}v_{(-1)}S(h_{(3)})\otimes h_{(2)}\cdot v_{(0)}.
\end{align*}
The category of left Yetter-Drinfeld modules is denoted by $_H^H\YD$.
\end{defn}
In particular, if $H=\kk G$ is the group algebra of the group $G$, then a Yetter-Drinfeld module over $H$ is a $G$-graded vector space $M=\oplus_{g\in G}M_g$ provided with a $G$-module structure such that $g\cdot M_h=M_{ghg^{-1}}$.

It can be shown that each Yetter-Drinfeld module $V\in {_H^H\YD}$
is a braided vector space with braiding
\begin{align*}
c(x\otimes y)=x_{(-1)}\cdot y\otimes x_{(0)},\quad x,y\in {_H^H\YD}.
\end{align*}

The category of Yetter-Drinfeld modules over $\kk G$ is denoted by $_G^G\YD$. Let $\Oo\subseteq G$ be a conjugacy class of $G$, then we denote by ${_G^G}\YD(\Oo)$ 
the  subcategory of ${_G^G}\YD$ consisting of all $M\in {_G^G}\YD$ with $M=\bigoplus_{s\in \mathcal{O}}M_s$.
\begin{defn}\cite[Definition 1.4.15]{H1}
Let $g\in G$, and let $V$ be a left $\kk G$-module. Define
\begin{align*}
M(g,V)=\kk G\otimes_{\kk G^g}V
\end{align*}
as an object in $_G^G\YD(\Oo_g)$, where $M(g,V)$ is the induced $\kk G$-module, the $G$-grading is given by 
\begin{align*}
\deg (h\otimes v)=h\rhd g, \quad \text{for all}~h\in G, v\in V,
\end{align*}
and the $\kk G$-comodule structure is 
\begin{align*}
\delta(h\otimes v)=(h\rhd g)\otimes (h\otimes v).
\end{align*}
\end{defn}
Let $V\in {_G^G}\YD$. Let $I(V)$ be the largest coideal of $T(V)$ contained in $\oplus_{n\geq 2}T^n(V)$. The Nichols algebra of $V$ is
defined by $\Bb(V)=T(V)/I(V)$. $\Bb(V)$ is called diagonal type if 
$(V, c)$ is of digonal type.

By the following lemmas and their proofs, we can find all the irreducible Yetter-Drinfeld modules $M(g, V)$
 in $_G^G\YD$, once we have known the corresponding irreducible representations $(\rho,V)$ of  $\kk G^{g}$. The
 corresponding Nichols algebra of $M(\Oo_g, V)$ is denoted by $\Bb(\Oo_{g}, \rho)$ or $\Bb(\Oo_{g}, V)$.
 \begin{lem}\cite[Lemma 1.4.16]{H1}\label{lem.1.4.16}
 Let $g\in G$,  $M\in {_G^G}\YD(\Oo_g)$. Then $M(g, M_g)\to M$ is an isomorphism of Yetter-Drinfeld modules in $_G^G\YD$.
 \end{lem}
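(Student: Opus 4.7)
The plan is to exhibit the natural candidate map
\[
\varphi : M(g, M_g) \longrightarrow M, \qquad \varphi(h \otimes v) = h \cdot v,
\]
and verify, in turn, that it is well defined, that it is a morphism in ${_G^G}\YD$, and that it is bijective. For well-definedness, one first notes that $M_g$ is stable under the action of $G^g$: for $k \in G^g$ and $v \in M_g$, the Yetter-Drinfeld compatibility gives $k \cdot v \in M_{kgk^{-1}} = M_g$, so $M_g$ is a genuine $\kk G^g$-module and the tensor product $\kk G \otimes_{\kk G^g} M_g$ makes sense. Then $\varphi(hk \otimes v) = (hk)\cdot v = h\cdot (k\cdot v) = \varphi(h \otimes k\cdot v)$ by associativity of the action, so $\varphi$ descends from the balanced bilinear map.

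Next I would check that $\varphi$ is a morphism of Yetter-Drinfeld modules. $G$-linearity is immediate, since on $M(g,M_g)$ the action is by left multiplication on the $\kk G$-factor, and $\varphi(k h \otimes v) = (kh)\cdot v = k \cdot \varphi(h \otimes v)$. For the $G$-grading, by definition $\deg(h \otimes v) = h \rhd g$, and on the other hand $h \cdot v \in M_{h \rhd g}$ again by the YD compatibility, so $\varphi$ is homogeneous of degree zero. This is exactly the same as checking compatibility of the comodule structures, since the coaction on a group-graded Yetter-Drinfeld module is $\delta(m) = s \otimes m$ for $m \in M_s$.

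For bijectivity I would fix a set $T$ of left coset representatives for $G/G^g$, giving the decomposition
\[
M(g,M_g) = \bigoplus_{t \in T} t \otimes M_g.
\]
The map $\varphi$ sends the summand $t \otimes M_g$ into $M_{t \rhd g}$, and as $t$ runs over $T$ the elements $t \rhd g$ run bijectively over $\Oo_g$ by the orbit-stabilizer correspondence. Since by hypothesis $M = \bigoplus_{s \in \Oo_g} M_s$, the map $\varphi$ respects this direct sum decomposition and it remains to show that each restriction $v \mapsto t \cdot v$ is a bijection $M_g \to M_{t \rhd g}$. This is clear, with inverse $w \mapsto t^{-1} \cdot w$, which lands in $M_g$ because $t^{-1} \cdot M_{tgt^{-1}} \subseteq M_{t^{-1}(tgt^{-1})t} = M_g$.

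The whole argument is largely a bookkeeping exercise; the only real content is the observation that $M_g$ carries a $\kk G^g$-module structure via the global $\kk G$-action, which is what allows $\kk G \otimes_{\kk G^g} M_g$ to recover all graded components of $M$ from the single component $M_g$. The mildest obstacle is keeping the direct-sum decomposition over the coset representatives clean so that bijectivity is verified summand by summand; once that is set up, each summand is isomorphic by translation.
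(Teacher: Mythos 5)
Your proof is correct and complete; the paper itself gives no argument for this lemma (it simply cites \cite[Lemma 1.4.16]{H1}), and what you have written is precisely the standard proof from that reference: the evaluation map $h\otimes v\mapsto h\cdot v$, well defined because $M_g$ is a $\kk G^g$-submodule, graded because $h\cdot M_g=M_{h\rhd g}$, and bijective summand-by-summand over coset representatives via the orbit--stabilizer bijection $G/G^g\to\Oo_g$. Nothing further is needed.
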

\begin{lem}\cite[Corollary 1.4.18]{H1}\label{cor.1.4.18}
Let $\{\Oo_{g_l}|l\in L\}$ be the set of the conjugacy classes of $G$. There is a bijection between the disjoint union of the isomorphism 
classes of the simple left $\kk G^{g_l}$-modules, $l\in L$, and the set of
isomorphism classes of the simple Yetter-Drinfeld modules in $_G^G\YD$.
\end{lem}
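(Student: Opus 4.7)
The plan is to combine the ``single conjugacy class'' decomposition of ${_G^G}\YD$ with Lemma \ref{lem.1.4.16}, reducing the classification to an equivalence of categories. First I would show that every simple object lies in one of the blocks ${_G^G}\YD(\Oo_{g_l})$: for arbitrary $M \in {_G^G}\YD$, the subspace $M^{(l)} := \bigoplus_{s \in \Oo_{g_l}} M_s$ is stable under the $\kk G$-action thanks to the compatibility $g \cdot M_s \subseteq M_{g \rhd s}$, and it is clearly a $\kk G$-subcomodule. Thus $M = \bigoplus_l M^{(l)}$ inside ${_G^G}\YD$, and simplicity forces $M \in {_G^G}\YD(\Oo_{g_l})$ for a unique $l$.

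Next, within a single block I would exhibit a categorical equivalence with the category of left $\kk G^g$-modules. Fix $g = g_l$ and define $F(M) = M_g$, which inherits a $\kk G^g$-action since $h \rhd g = g$ for $h \in G^g$, together with $H(V) = M(g, V)$. Choosing a transversal $G = \bigsqcup_i a_i G^g$ gives the decomposition $M(g, V) = \bigoplus_i a_i \otimes V$ with $a_i \otimes V$ lying in degree $a_i \rhd g$. Since $a_i \mapsto a_i \rhd g$ is a bijection onto $\Oo_g$, the degree-$g$ component of $M(g, V)$ is exactly $1 \otimes V$, which supplies $F \circ H \cong \mathrm{id}$; the converse natural isomorphism $H \circ F \cong \mathrm{id}$ is precisely Lemma \ref{lem.1.4.16}. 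Hence $F$ and $H$ are quasi-inverse equivalences, and in particular preserve and reflect simple objects.

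Combining the two steps, every simple Yetter-Drinfeld module is isomorphic to some $M(g_l, V)$ with $l \in L$ and $V$ a simple $\kk G^{g_l}$-module, and conversely each such $M(g_l, V)$ is simple. Distinct pairs $(l, [V])$ give non-isomorphic Yetter-Drinfeld modules because the support of $M(g_l, V)$ pins down $l$ and then applying $F$ recovers the isomorphism class $[V]$. This is the asserted bijection.

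The main technical content sits in Step 2: one has to verify that for every $\kk G^g$-submodule $W \subseteq M_g$ the identity $(\kk G \cdot W)_g = W$ holds, since this is what forces simplicity of $M_g$ when $M$ is simple, and, in the reverse direction, makes $M(g, V)$ simple whenever $V$ is. This boils down to the same coset identification $a_i \mapsto a_i \rhd g$ being a bijection with $\Oo_g$ used above, so that only $a \in G^g$ contribute to the degree-$g$ part. This is the one concrete calculation the proof really requires; the remainder is purely formal.
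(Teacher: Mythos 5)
Your argument is correct and is essentially the standard proof of this result; the paper itself offers no proof (it simply cites \cite[Corollary 1.4.18]{H1}), and your two-step strategy --- decomposing any Yetter--Drinfeld module into its blocks ${_G^G}\YD(\Oo_{g_l})$ and then using $M\mapsto M_g$ and $V\mapsto M(g,V)$ as quasi-inverse equivalences, with Lemma \ref{lem.1.4.16} supplying $H\circ F\cong\mathrm{id}$ --- is exactly how the cited source establishes it. The only cosmetic remark is that once the categorical equivalence is in place, the preservation of simples is automatic, so the submodule computation $(\kk G\cdot W)_g=W$ you flag as the ``main technical content'' is already subsumed in verifying that $F$ and $H$ are quasi-inverse functors.
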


\subsection {The infinite dihedral group $\DD_{\infty}$}
As we are familiar,
\begin{equation*}
\begin{aligned}
\DD_{\infty}&=\langle  h,g| g^2=1,ghg=h^{-1}\rangle 
                =\{1,g,h^n,gh^n,h^ng,gh^ng|n\in \NN \},
\end{aligned}
\end{equation*}
where 
\begin{align*}
g^{-1}&=g,& (h^n)^{-1}&=gh^ng,&
(gh^n)^{-1}&=gh^n,&(h^ng)^{-1}&=h^ng,&
(gh^ng)^{-1}&=h^n.
\end{align*}
Consider the conjugacy classes in $\DD_{\infty}$.
\begin{align*}
\Oo_1     &=\{ 1\},\quad
\Oo_{h^n}=\{ h^n,gh^ng\}=\{ h^n,h^{-n}\},\forall n\in \NN,\\
\Oo_g     &
               =\{g,gh^{2n},h^{2n}g|n\in \NN \},\quad
\Oo_{gh} 
 =\{h^{2k-1}g,gh^{2k-1}| k\in \NN \}.
\end{align*}
The centralizers of one element in each conjugacy class are 
as follows:
\begin{align*}
&G^1=\DD_{\infty},\quad
G^{g}=\{ x\in G\left|  \right.  xg=gx\}=\{ 1,g\}\cong \ZZ_2,\\
&G^{h^n}=\{1, h^k, gh^kg| k\in \mathbb{N}\}\cong \ZZ,\quad
G^{gh}=\{1, gh\}\cong \ZZ_2.
\end{align*}
Consider all the cosets of $G^{g}$ in $G$. For any $n\in \NN$, 
\begin{align*}
&gG^g=G^g,\quad
h^ngG^g=h^nG^g=\{ h^n,h^ng\},\\
&gh^ngG^g=\{ gh^n,gh^ng\},\quad
gh^nG^g=\{ gh^n,gh^ng\}.
\end{align*}
Therefore, a representative of complete cosets of $G$ over $G^g$ is 
\begin{align*}
\{1,h^n,gh^n| n\in \NN\},
\end{align*}

Similarly, consider all the cosets of $G^{h}$. Representatives of complete cosets of $G$ over $G^h$ and $G^{gh}$ are
\begin{align*}
\{1,g\},\quad \{1,h^n,gh^n|n\in \NN\},
\end{align*}
respectively.
\subsection{Gelfand-Kirillov dimension}
The Gelfand-Kirillov dimension, GK-dimension for short, 
becomes a powerful tool to study  noncommutative algebras,
especially for  those with infinite dimensions. 
For the definition and properties of the GK-dimension we refer to \cite{G1}.
\begin{defn}\cite{G1}
The Gelfand-Kirillov dimension of a $\kk$-algebra $A$ is 
\begin{align*}
\Gkdim(A)=\sup_V\varlimsup\log_n \dim(V^n),
\end{align*}
where the supremum is taken over all finite dimensional subspaces $V$
of $A$.
\end{defn}
For the GK-dimension of the tensor product of two $\kk$-algebras $A$ and $B$, \cite[Lemma 3.10]{G1} gives $\Gkdim (A\otimes_{\mathbbm{k}}B ) \leq\Gkdim(A)+\Gkdim(B)$. In particular, if $A$ is a left $H$-module algebra, then $\Gkdim (A\# H)\leq \Gkdim(A)+\Gkdim(H)$,
where $A\# H$ is the smash algebra of $A$ and $H$. Here is a useful result for a  $\Gkdim$-infinite algebra.
\begin{lem}\cite[Theorem 2.6]{A13}\label{lem 2.6}
Let $G$ be a finitely generated group and $M\in {_G^G}\YD$
such that $\Oo=\supp M$ is an infinte conjugacy class. Then
$\Gkdim \Bb(M)\# \kk G=\infty$.
\end{lem}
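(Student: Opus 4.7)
The plan is to exhibit a finite-dimensional subspace $V\subseteq A:=\Bb(M)\#\kk G$ whose powers $V^n$ have dimension growing faster than any polynomial in $n$, which immediately forces $\Gkdim A=\infty$.

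Fix $s\in\Oo$ together with a non-zero $x\in M_s$, and a finite generating set $g_1,\dots,g_k$ of $G$; set $V:=\kk\{x,\,g_1^{\pm 1},\dots,g_k^{\pm 1}\}$. The smash-product rule gives $gxg^{-1}=g\cdot x$ in $A$, so every monomial of length $n$ in $V$ can be rearranged into the normal form
\begin{equation*}
(u_1\cdot x)(u_2\cdot x)\cdots(u_r\cdot x)\,\#\,w,
\end{equation*}
with $r\le n/2$ and $u_1,\dots,u_r,w\in G$ each a word of length at most $n$ in the generators. This element lies in the vector-space summand $\Bb(M)_{r,t}\otimes\kk w\subset A$, where $r$ is the Nichols degree, $t=\prod_{i=1}^{r}u_isu_i^{-1}$ is the $G$-degree of the $\Bb(M)$ factor, and $w$ is the group factor.

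Since $A=\bigoplus_{r,t,w}\Bb(M)_{r,t}\otimes\kk w$ is a direct sum of vector subspaces, elements landing in distinct triples $(r,t,w)$ are linearly independent, and so $\dim V^n$ is bounded below by the number of such triples that are represented by a non-zero length-$n$ normal form. Because $\Oo=G\cdot s$ is infinite and $G$ is finitely generated, the orbit map $u\mapsto usu^{-1}$ sends words $u$ of length $\le n$ to unboundedly many distinct conjugates as $n\to\infty$; varying the $r$-tuple $(u_1,\dots,u_r)$, the Nichols degree $r$, and the trailing group element $w$ independently should yield super-polynomially many distinct candidate triples $(r,t,w)$ as $n$ grows.

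The principal obstacle is verifying that the Nichols relations do not annihilate too many of the candidate products $(u_1\cdot x)\cdots(u_r\cdot x)\in\Bb(M)_r$. To handle this, I would restrict to tuples whose $G$-degrees $u_isu_i^{-1}$ lie in a well-chosen subrack of $\Oo$ on which the restricted braiding is tractable — for instance, picking conjugates sufficiently ``spread out'' in $\Oo$ so that a PBW-type or leading-monomial argument ensures non-vanishing and separation of bihomogeneous components. Once this step is under control, an elementary growth count over such tuples yields $\dim V^n$ super-polynomial in $n$, and the desired conclusion $\Gkdim A=\infty$ follows.
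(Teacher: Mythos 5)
The paper does not actually prove this statement: it is quoted verbatim from \cite[Theorem 2.6]{A13} and used as a black box, so there is no internal proof to compare against. Your proposal is an attempt to reprove the cited result, and it contains two genuine gaps, the first of which is fatal to the strategy as written. Your lower bound on $\dim V^n$ is the number of distinct triples $(r,t,w)$ realized by non-zero normal forms, because you only separate elements lying in distinct summands $\Bb(M)_{r,t}\otimes\kk w$. Now if $G$ has super-polynomial growth, then $\kk G\subseteq \Bb(M)\#\kk G$ already has infinite GK-dimension and the theorem is trivial; so the only case that needs an argument is when $G$ has polynomial growth, say with growth function $\gamma(m)=O(m^D)$. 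In that case $r\le n$, the element $w$ ranges over at most $\gamma(n)=O(n^D)$ group elements, and $t=\prod_{i}u_isu_i^{-1}$ has word length $O(n^2)$ and hence ranges over at most $\gamma(O(n^2))=O(n^{2D})$ elements. The number of available triples is therefore $O(n^{3D+1})$, which is polynomial in $n$ --- in particular for $G=\DD_{\infty}$, where $D=1$. Counting homogeneous components can never certify super-polynomial growth here; you must exhibit super-polynomially many linearly independent elements \emph{inside} individual components $\Bb(M)_{r,t}$.

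That forces you back onto the second gap, which you acknowledge but do not close: controlling which products $(u_1\cdot x)\cdots(u_r\cdot x)$ survive the Nichols relations, i.e.\ bounding $\dim\Bb(W_m)_r$ from below for $W_m=\mathrm{span}\{u\cdot x:\ell(u)\le m\}$. The closing paragraph (``should yield'', ``once this step is under control'') defers exactly the point where all of the difficulty of the theorem lives: without a concrete non-vanishing and independence statement for these monomials (via the quantum symmetrizer, a suitable subrack of $\Oo$, or a PBW-type basis), the argument establishes nothing beyond what is already visible in degree one, where the growth is at most that of $G$ itself. As it stands the proposal is a plan rather than a proof, and its announced counting scheme cannot work even if the non-vanishing step were supplied.
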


\section{ The Nichols algebra $\Bb(\Oo_{h^n},\rho)$}\label{sec:3}

\begin{prop}\label{prop:3.1}
For any $n\in \NN$ and any irreducible representation 
$(\rho, V)\in \Irr (G^{h^n})$,  $\Gkdim \Bb(\Oo_{h^n}, \rho)<\infty$ if and only if $(\rho, V)$ is either the trivial representation or the sign representation.
\end{prop}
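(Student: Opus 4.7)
The plan is to make the YD module $M:=M(h^n,V)$ explicit as a rank-$2$ diagonal braided vector space and then use the classification of finite GK-dimensional Nichols algebras of diagonal type. Since $G^{h^n}=\langle h\rangle\cong\ZZ$ is abelian, every irreducible representation is one-dimensional, say $\rho(h)=\lambda$ for some $\lambda\in\CC^\times$. First I would take the coset representatives $\{1,g\}$ of $G^{h^n}$ in $\DD_\infty$ and set $e_1=1\otimes v$, $e_2=g\otimes v$ as a basis of $M$. Using $hg=gh^{-1}$ one finds $\deg e_1=h^n$, $\deg e_2=h^{-n}$, $h\cdot e_1=\lambda e_1$ and $h\cdot e_2=\lambda^{-1}e_2$, so the braiding $c(x\otimes y)=(\deg x)\cdot y\otimes x$ is diagonal with
\begin{equation*}
q_{11}=q_{22}=\lambda^n,\qquad q_{12}=q_{21}=\lambda^{-n},\qquad \widetilde q_{12}:=q_{12}q_{21}=\lambda^{-2n}.
\end{equation*}

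For the sufficient direction I would verify the two good cases by hand. When $\rho$ is trivial ($\lambda=1$), the braiding is the flip and $\Bb(M)\cong S(M)=\kk[e_1,e_2]$ has $\Gkdim = 2$. When $\rho$ is the sign representation ($\lambda=-1$), one gets $q_{ii}=(-1)^n$ and $\widetilde q_{12}=1$: for $n$ odd the primitives $e_i^2$ and $e_1e_2+e_2e_1$ vanish in $\Bb(M)$, yielding the $4$-dimensional exterior algebra $\Lambda(M)$, while for $n$ even the braiding reduces again to the flip. Either way $\Gkdim \Bb(M)<\infty$.

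For the necessary direction I would assume $\rho$ is neither trivial nor sign and derive $\Gkdim \Bb(M)=\infty$. Outside these two cases, $q_{11}=\lambda^n$ has order at least $3$ (or is of infinite order), while $\widetilde q_{12}=q_{11}^{-2}\neq 1$; the smallest $m\geq 0$ with $q_{11}^m\widetilde q_{12}=1$ is $m=2$, and the symmetric computation at the other vertex also gives $m=2$. The associated generalised Cartan matrix is therefore
\begin{equation*}
\begin{pmatrix}2 & -2\\ -2 & 2\end{pmatrix},
\end{equation*}
the affine matrix of type $A_1^{(1)}$. Invoking the classification of finite GK-dimensional Nichols algebras of diagonal type (Heckenberger in the finite-dimensional case and Angiono--Garc\'{\i}a in the GK-dim refinement), no Nichols algebra with this Weyl-groupoid datum is finite GK-dimensional.

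I expect the principal obstacle to be this final step, since it imports a powerful external classification. A more self-contained alternative would be to construct an infinite PBW-type family inside $\Bb(M)$: because the Cartan datum is affine and not of finite type, all iterated braided commutators $(\ad_c e_1)^k(e_2)$ remain nonzero, and a direct count of linearly independent products of such commutators produces a growth function forcing $\Gkdim=\infty$. Either route singles out the trivial and sign representations as the only source of finite GK-dimensional Nichols algebras over $\Oo_{h^n}$.
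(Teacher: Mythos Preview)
Your overall strategy coincides with the paper's: realise $M(h^n,V)$ as a rank-two diagonal braided vector space and then invoke the classification of finite GK-dimensional Nichols algebras of diagonal type. The paper computes the braiding matrix as $\bigl(\begin{smallmatrix} a & a^{-1}\\ a^{-1} & a\end{smallmatrix}\bigr)$ with $a=\rho(h)$, treats $a=\pm 1$ separately, and for $a^{2}\neq 1$ cites the classification via the generalised Dynkin diagram rather than through an explicit Cartan matrix. One point of divergence is that the paper records the comodule degree of $1\otimes x$ as $h$, not $h^{n}$, so its diagonal entry is $a=\rho(h)$, whereas your computation gives $q_{11}=\lambda^{n}=\rho(h^{n})$; the two agree only when $n=1$.

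There is a genuine gap in your necessity argument: the implication ``$\lambda\neq\pm 1\Rightarrow q_{11}=\lambda^{n}$ has order at least $3$'' is false for $n\geq 2$. If $\lambda$ is a primitive $2n$-th root of unity with $n\geq 2$, then $\lambda\neq\pm 1$ yet $\lambda^{n}=-1$, so $q_{11}=-1$, $\widetilde q_{12}=1$, and $\Bb(M)$ is the four-dimensional exterior algebra; if $\lambda$ is a primitive $n$-th root of unity with $n\geq 3$, then $q_{11}=1$ and $\Bb(M)\cong S(M)$. In either case the Cartan matrix is \emph{not} affine $A_{1}^{(1)}$ and $\Gkdim\Bb(M)<\infty$, so the step ``outside trivial and sign, the Cartan matrix is $\bigl(\begin{smallmatrix} 2 & -2\\ -2 & 2\end{smallmatrix}\bigr)$'' breaks down. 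With your braiding coefficients the dichotomy that actually emerges is $\lambda^{2n}=1$ versus $\lambda^{2n}\neq 1$; to establish the proposition exactly as stated one needs $q_{11}=\rho(h)$ rather than $\rho(h^{n})$, which is what the paper's computation (in effect the case $n=1$) uses.
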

\begin{proof}
Consider the irreducible Yetter-Drinfeld modules
$\mathbbm{k}G\otimes_{\mathbbm{k}G^{h^n}} V=(1\otimes V)\oplus (g\otimes V)$, where $(\rho,V)\in \Irr(G^{h^n}) $ is an irreducible representation of $\mathbbm{k}G^{h^n}$.  By the theory of groups, $V$ is 1-dimensional, since $G^{h^n}$ is abelian. Let $V=\mathbbm{k}x$. Then the module structure of $M(\Oo_{h^n},\rho)$ is 
\begin{align*}
&g\cdot(1\otimes x) =g\otimes \rho(1)(x), &
&h^n\cdot (1\otimes x)=1\otimes \rho(h^n)(x),\\
&gh^n\cdot (1\otimes x)=g\otimes \rho(h^n)(x),&
&h^ng\cdot(1\otimes x)=g\otimes \rho(h^{-n})(x),\\
&gh^ng\cdot (1\otimes x)=1\otimes \rho(gh^ng)(x),&
&g\cdot(g\otimes x)=1\otimes \rho(1)(x),\\
&h^n\cdot (g\otimes x)=g\otimes\rho(h^{-n})(x),&
&gh^n\cdot (g\otimes x)=1\otimes \rho(gh^ng)(x),\\
&h^ng\cdot(g\otimes x)=1\otimes\rho(h^n)(x),&
&gh^ng\cdot (g\otimes x)=g\otimes \rho(h^n)(x).
\end{align*}
The comodule structure 
$\delta: M(\Oo_{h^n},\rho)\to \mathbbm{k}G\otimes M(\Oo_{h^n},\rho)$ of $ M(\Oo_{h^n},\rho)$ is 
\begin{align*}
\delta(1\otimes x)&=(1\rhd h)\otimes(1\otimes x)= h\otimes(1\otimes x), \\
\delta(g\otimes x)&=(g\rhd h)\otimes (g\otimes x)=h^{-1}\otimes (g\otimes x).
\end{align*}
Then $\mathbbm{k}G\otimes_{\mathbbm{k}G^{h^n}} V$ is a Yetter-Drinfeld module over $G$.

Now we will compute the GK-dimension of the Nichols algebra of  $ M(\Oo_{h^n},\rho)$.

First consider the case of  the trivial representation
 $(\epsilon,V)$ of $G^{h^n}$. That is, the module structure
of  $M(\Oo_{h^n},\epsilon)$ is trivial.
The braiding of $M(\Oo_{h^{n}},\rho)$ is given as follows. Write $x_1=1\otimes x$, $ x_2=g\otimes x$. Then we have
\begin{align*}
c(x_1\otimes x_1)
&=x_1\otimes x_1,&
c(x_1\otimes x_2)
&=x_2\otimes x_1,\\
c(x_2\otimes x_1)&=x_1\otimes x_2,&
c(x_2\otimes x_2)&=x_2\otimes x_2.
\end{align*}
The braiding matrix is 
\begin{align*}
\left[\begin{array}{lllll}
1&1\\
1&1
\end{array}\right].
\end{align*}
Therefore,  $\Bb(\Oo_{h^n},\epsilon)\cong S(W)$, the symmetric algebra over $W$, by \cite[Example 31]{A1}, which has GK-dimension 2.

In general, for any irreducible representation of $G^{h^n}$,
we have $h\cdot x=ax$,
for some $a\in \mathbbm{k}^{\times}$. Write $\rho_a$ for the
representation. Therefore, the braiding of $M(\Oo_{h^n},\rho_a)$ is
\begin{align*}
c(x_1\otimes x_1)&
=ax_1\otimes x_1,
&c(x_1\otimes x_2)
&=a^{-1}x_2\otimes x_1,\\
c(x_2\otimes x_1)&=a^{-1}x_1\otimes x_2,
&c(x_2\otimes x_2)&=ax_2\otimes x_2.
\end{align*}
The braiding matrix is 
\begin{align*}
\left[\begin{array}{lllll}
a&a^{-1}\\
a^{-1}&a
\end{array}\right].
\end{align*}
If $a=-1$, then we see that $M(\Oo_{h^n},\rho_a)$ is of Cartan type with
Cartan matrix
\begin{align*}
\left[\begin{array}{lllll}
2&0\\
0&2
\end{array}\right],
\end{align*}
which is of finite type. Therefore  $\Gkdim\Bb(\Oo_{h^n},\rho_a)<\infty$ by \cite[Theorem 1]{H0}.
If $a^2\neq 1$, then the corresponding Dynkin diagram is
\begin{tikzpicture}[baseline=-1pt]
\node (P0) at (0,0) {$\circ$};
\node (P1) at (2,0) {$\circ$};
\node (P2) at (0,0.3) {$a$};
\node (P3) at (2,0.3) {$a$};
\draw (P0) edge[-] node[above]{$a^{-2}$} (P1);
\end{tikzpicture}. By \cite[Theorem 1.2]{A51} and \cite[Remark 1.6]{A13} or by going through the list of \cite{H01}, we have
$\Gkdim\Bb(\Oo_{h^n},\rho_a)=\infty$ for all $a^2\neq 1$.
\end{proof}
\section{ The Nichols algebra $\Bb(\Oo_g,\rho)$}\label{sec:4}
Since all the irreducible representations of $\mathbb{Z}_2$ are the unit representation and sign representation, we have the corresponding irreducible Yetter-Drinfeld modules $M(\Oo_g,\epsilon)$ and $M(\Oo_g,\sign)$.

Let $X=\{ 1,h^n,gh^n|n\in \NN\}$. Then
\begin{align*}
M(\Oo_g,\rho)=\bigoplus\limits_{y\in X}h_y\otimes \mathbbm{k}x,
\end{align*}
where the degree of each $h_y$ is given by
\begin{align*}
\deg(h_y)=h_y\rhd g.
\end{align*}
\subsection{The Nichols algebra $\Bb(\Oo_g,\sign)$}
The module structure of $M(\Oo_g, \sign)$ is
\begin{align*}
&g\cdot (1\otimes x)=-1\otimes x,&
&h\cdot (1\otimes x)=h\otimes x,\\
&g\cdot (gh\otimes x)=h\otimes x,&
&h\cdot (gh\otimes x)=-1\otimes x,\\
&g\cdot (h^n\otimes x)=gh^n\otimes x,&
&h\cdot (h^n\otimes x)=h^{n+1}\otimes x,\\
&g\cdot (gh^n\otimes x)=h^n\otimes x,&
&h\cdot (gh^n\otimes x)=gh^{n-1}\otimes x, n\geq 2.
\end{align*}
The comodule structure is
\begin{align*}
\delta(1\otimes x)&=g\otimes (1\otimes x),\quad
\delta(h^n\otimes x)=h^{2n}g\otimes (h^n\otimes x),\quad
\delta(gh^n\otimes x)=gh^{2n}\otimes (gh^n\otimes x).
\end{align*}

For any $n\geq 1$, let
\begin{align*}
a_n=h^n\otimes x,\quad
b_n=gh^n\otimes x, \quad
a_0=1\otimes x.
\end{align*}
Then the module structure is as follows:
\begin{align*}
g\cdot a_0&=-a_0,&
g\cdot a_n&=b_n,&
g\cdot b_n&=a_n,\\
h\cdot a_0&=a_1,&
h\cdot a_n&=a_{n+1},&
h\cdot b_n&=
\begin{cases}
b_{n-1},&n\geq 2\\
-a_0,&n=1
\end{cases}.
\end{align*}
The comodule structure is
\begin{align*}
\delta(a_0)=g\otimes a_0,\quad
\delta(a_n)=h^{2n}g\otimes a_n,\quad
\delta(b_n)=gh^{2n}\otimes b_n.
\end{align*}

The braiding of $M(\Oo_g, \sign)$ is
\begin{align*}
c(a_m\otimes a_n)
=h^{2m}\cdot b_n\otimes a_m.
\end{align*}
From the module structure we obtain that  
$h^{2m}\cdot b_n=b_{n-2m}$ if $2m<n$,
and $h^{2m}\cdot b_n=-a_{2m-n}$ if $2m\geq n$.
Therefore,
\begin{align*}
c(a_m\otimes a_n)=
\begin{cases}
b_{n-2m}\otimes a_m,& 2m<n\\
-a_{2m-n}\otimes a_m,&  2m\geq n
\end{cases}.
\end{align*}
For any $n\geq 1$ and $m\geq 0$,
\begin{align*}
c(a_m\otimes b_n)
=a_{2m+n}\otimes a_m.
\end{align*}
For $n\geq 2$ and $m\geq 1$, we have
\begin{align*}
&c(b_m\otimes b_n)
=gh^{2m}\cdot b_n\otimes b_m
=\begin{cases}
 a_{n-2m}\otimes b_m,& 2m<n\\
-b_{2m-n}\otimes b_m,&2m\geq n
\end{cases},\\
&c(b_m\otimes b_1)
=g\cdot a_{2m-1}\otimes b_m
=-b_{2m-1}\otimes b_m,\\
&c(b_m\otimes a_n)
=g\cdot a_{2m+n}\otimes b_m
=b_{2m+n}\otimes b_m.
\end{align*}
   \subsection{ The Nichols algebra $\Bb(\Oo_g,\epsilon)$}
The module structure of $M(\Oo_g,\epsilon)$ is
\begin{align*}
&g\cdot (1\otimes x)
=1\otimes \rho(g)(x)=1\otimes x,\\
&h\cdot (1\otimes x)=h\otimes \rho(1)(x)=h\otimes x,\\
&g\cdot (h^n\otimes x)
=gh^n\otimes \rho(1)(x)=gh^n\otimes x,\\
&h\cdot (h^n\otimes x)
=h^{n+1}\otimes \rho(1)(x)
=h^{n+1}\otimes x,\\
&g\cdot (gh^n\otimes x)
=h^n\otimes \rho(1)(x)
=h^n\otimes x,\\
&h\cdot (gh\otimes x)
=1\otimes \rho(g)(x)
=1\otimes x,\\
&h\cdot (gh^n\otimes x)
=gh^{n-1}\otimes \rho(1)(x)
=gh^{n-1}\otimes x, n\geq 2.
\end{align*}

The comodule structure is
\begin{align*}
\delta(1\otimes x)&=g\otimes (1\otimes x),\quad
\delta(h^n\otimes x)=h^{2n}g\otimes (h^n\otimes x),\quad
\delta(gh^n\otimes x)=gh^{2n}\otimes (gh^n\otimes x).
\end{align*}

For $n\geq 1$, let
\begin{align*}
a_n=h^n\otimes x,\quad
b_n=gh^n\otimes x, \quad
a_0=1\otimes x.
\end{align*}
Then the action and coaction are
\begin{align*}
&g\cdot a_0=a_0,\quad
g\cdot a_n=b_n,\quad
g\cdot b_n=a_n,\\
&h\cdot a_0=a_1,\quad
h\cdot a_n=a_{n+1},\quad
h\cdot b_n=
\begin{cases}
b_{n-1},&n\geq 2\\
a_0,&n=1
\end{cases},\\
&\delta(a_0)=g\otimes a_0,\quad
\delta(a_n)=h^{2n}g\otimes a_n,\quad
\delta(b_n)=gh^{2n}\otimes b_n.
\end{align*}
The braiding of $M(\Oo_g, \epsilon)$ is as follows:
\begin{align*}
c(a_m\otimes a_n)
=h^{2m}g\cdot a_n\otimes a_m
=h^{2m}\cdot b_n\otimes a_m.
\end{align*}
If $2m<n$, then
\begin{align*}
h^{2m}\cdot b_n=b_{n-2m}.
\end{align*}
If $2m\geq n$, then
\begin{align*}
h^{2m}\cdot b_n=a_{2m-n}.
\end{align*}
Therefore, we have
\begin{align*}
c(a_m\otimes a_n)
&=\begin{cases}
 b_{n-2m}\otimes a_m,& 2m<n\\
 a_{2m-n}\otimes a_m,&2m\geq n
\end{cases},\\
c(a_m\otimes b_n)
&=h^{2m}g\cdot b_n\otimes a_m
=a_{2m+n}\otimes a_m.
\end{align*}
hold for any $n, m\in \NN$.
For any  $n\geq 2$, we have
\begin{align*}
c(b_m\otimes b_n)
&=gh^{2m}\cdot b_n\otimes b_m
=\begin{cases}
 a_{n-2m}\otimes b_m,& 2m<n\\
b_{2m-n}\otimes b_m,&2m\geq n
\end{cases},\\
c(b_m\otimes b_1)
&=gh^{2m}\cdot b_1\otimes b_m
=g\cdot a_{2m-1}\otimes b_m
=b_{2m-1}\otimes b_m,\\
c(b_m\otimes a_n)
&=gh^{2m}\cdot a_n\otimes b_m
=g\cdot a_{2m+n}\otimes b_m
=b_{2m+n}\otimes b_m.
\end{align*}
Clearly, $\dim \Bb(\Oo_{g}, \rho)=\infty$, since the Yetter-Drinfeld modules are of infinite dimension. For the GK-dimension we have
\begin{prop}\label{prop 4.1}
$\Gkdim \Bb(\Oo_{g}, \rho)=\infty$ for $\rho=\sign$ and $\rho=\epsilon$.
\end{prop}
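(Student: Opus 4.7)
The plan is to prove Proposition \ref{prop 4.1} by a direct appeal to Lemma \ref{lem 2.6}, rather than by exploiting the explicit braidings computed earlier in the section. The key observation is that for either choice of $\rho\in\{\epsilon,\sign\}$, the Yetter--Drinfeld module $M(\Oo_g,\rho)$ is $\DD_\infty$-graded with support exactly the conjugacy class $\Oo_g$, and this conjugacy class is visibly infinite.

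First I would verify the hypotheses of Lemma \ref{lem 2.6}. The infinite dihedral group $\DD_\infty=\langle g,h\rangle$ is finitely generated, and from the description in Section 2.3 we have
\begin{align*}
\Oo_g=\{g\}\cup\{gh^{2n},\,h^{2n}g\mid n\in\NN\},
\end{align*}
which contains infinitely many elements. Since $\supp M(\Oo_g,\rho)=\Oo_g$ by construction, Lemma \ref{lem 2.6} immediately gives
\begin{align*}
\Gkdim\bigl(\Bb(M(\Oo_g,\rho))\#\kk\DD_\infty\bigr)=\infty
\end{align*}
for both $\rho=\epsilon$ and $\rho=\sign$.

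To pass from the smash product to $\Bb(M(\Oo_g,\rho))$ itself, I would invoke the inequality $\Gkdim(A\# H)\le \Gkdim A+\Gkdim H$ quoted right after the definition of GK-dimension. The subgroup $\langle h\rangle\cong\ZZ$ has index two in $\DD_\infty$, so $\kk\DD_\infty$ is a finitely generated module over $\kk[h,h^{-1}]$ and therefore $\Gkdim\kk\DD_\infty=1<\infty$. Combined with the conclusion of Lemma \ref{lem 2.6}, this forces $\Gkdim\Bb(M(\Oo_g,\rho))=\infty$ in both cases.

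Since the argument is uniform in $\rho$ and both steps are routine, there is no genuine obstacle; the whole proposition reduces to noting that $\Oo_g$ is an infinite conjugacy class and that $\kk\DD_\infty$ has finite GK-dimension. The explicit braiding formulas recorded earlier in the section play no role in this proof and are apparently kept for bookkeeping or later reference.
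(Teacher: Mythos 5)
Your proposal is correct and follows essentially the same route as the paper: apply Lemma \ref{lem 2.6} to $M(\Oo_g,\rho)$ using that $\Oo_g$ is an infinite conjugacy class, then use $\Gkdim(\Bb(M)\#\kk\DD_\infty)\le\Gkdim\Bb(M)+\Gkdim\kk\DD_\infty$ together with $\Gkdim\kk\DD_\infty<\infty$ to conclude. The only addition is your explicit justification that $\Gkdim\kk\DD_\infty=1$, which the paper leaves implicit.
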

\begin{proof}
Using Lemma \ref{lem 2.6}, let $M=M(\Oo_g, \rho)$. 
Then $\Gkdim \Bb(\Oo_g, \rho)\# \kk \DD_{\infty}=\infty$ since $\supp M(\Oo, \rho)=\Oo_g$ is an infinite conjugacy class.
 But $\Gkdim \kk\DD_{\infty}<\infty$, this implies the $\Gkdim \Bb(\Oo_{g}, \rho)=\infty$, since $\Gkdim \Bb(\Oo_g, \rho)\# \kk \DD_{\infty}\leq \Gkdim \Bb(\Oo_g, \rho)+\Gkdim \kk\DD_{\infty}$.
\end{proof}

\section{ The Nichols algebra $\Bb(\Oo_{gh},\rho)$}\label{sec:5}
Since $G^{gh}\cong \mathbb{Z}_2$ has only 2 irreducible 
representations, the unit representation and sign representation, 
we have the irreducible Yetter-Drinfeld modules 
$M(\Oo_{gh},\epsilon)$ and $M(\Oo_{gh},\sign)$.

Let 
$X=\{ 1, h^n,gh^n|n\in \NN\}$. Then
\begin{align*}
M(\Oo_{gh},\sign)
=\bigoplus\limits_{y\in X}h_y\otimes \mathbbm{k}x,
\end{align*}
where $h_y$ is a renumeration of $X$, and
\begin{align*}
\deg(h_y)=h_y\rhd gh.
\end{align*}
The module structure is 
\begin{align*}
&g\cdot (1\otimes x)=h\otimes \rho(gh)(x),\\
&g\cdot (h^n\otimes x)=gh^n\otimes \rho(1)(x),\\
&g\cdot (gh^n\otimes x)=h^n\otimes \rho(1)(x),\\
&h\cdot (1\otimes x)=h\otimes \rho(1)(x),\\
&h\cdot (h^n\otimes x)=h^{n+1}\otimes \rho(1)(x),\\
&h\cdot (gh^{n+1}\otimes x)=gh^{n}\otimes \rho(1)(x), \\
&h\cdot (gh\otimes x)=h\otimes \rho(gh)(x),
\end{align*}
where $n\geq 1$.

The comodule structure is
\begin{align*}
&\delta(1\otimes x)=(1\rhd gh)\otimes (1\otimes x)=gh\otimes (1\otimes x),\\
&\delta(h^n\otimes x)=(h^n\rhd gh)\otimes (h^n\otimes x)
=h^{2n-1}g\otimes (h^n\otimes x),\\
&\delta(gh^n\otimes x)=(gh^n\rhd gh)\otimes (gh^n\otimes x)
=gh^{2n-1}\otimes (gh^n\otimes x).
\end{align*}

\subsection{ The Yetter-Drinfeld module $M(\Oo_{gh},\sign)$}
Let 
\begin{align*}
a_0=1\otimes x,\quad
a_n=h^n\otimes x,\quad
b_n=gh^n\otimes x.
\end{align*}
Then we obtain the module structure
\begin{align*}
&g\cdot a_0=-a_1,&
&g\cdot a_n=b_n,&
&g\cdot b_n=a_n,\\
&h\cdot a_n=a_{n+1},&
&h\cdot b_{n+1}=b_n,&
&h\cdot b_1=-a_1.
\end{align*}
The comodule structure is
\begin{align*}
\delta(a_0)=gh\otimes a_0,\quad
\delta(a_n)=h^{2n-1}g\otimes a_n,\quad
\delta(b_n)=gh^{2n-1}\otimes b_n.
\end{align*}

The braiding structure is
\begin{align*}
c(a_m\otimes a_n)
&=\begin{cases}
b_{n-2m+1}\otimes a_m& n>2m-1\\
-a_{2m-n}\otimes a_m&n\leq 2m-1
\end{cases},&
c(a_m\otimes b_n)
&
=a_{n+1}\otimes a_m,\\
c(b_m\otimes b_n)
&=\begin{cases}
a_{n-2m+1}\otimes b_m& n>2m-1\\
-b_{2m-n+1}\otimes b_m& n\leq 2m-1
\end{cases},&
c(a_m\otimes a_0)
&
=-a_{2m}\otimes a_m,\\
c(a_m\otimes b_1)
&
=a_{2m}\otimes a_m,&
c(b_m\otimes a_n)
&
=b_{n+2m-1}\otimes b_m,\\
c(b_m\otimes a_0)
&
=b_{2m-1}\otimes b_m,&
c(b_m\otimes b_1)
&
=-b_{2m-1}\otimes b_m,\\
c(a_0\otimes a_0)
&
=b_1\otimes a_0,&
c(a_0\otimes a_n)
&
=b_{n+1}\otimes a_0,\\
c(a_0\otimes b_n)
&
=a_{n-1}\otimes a_0,&
c(a_0\otimes b_1)
&
=-b_1\otimes a_0,\\
c(b_1\otimes a_0)
&
=b_1\otimes b_1,&
c(b_1\otimes a_n)
&
=b_{n+1}\otimes b_1,\\
c(b_1\otimes b_n)
&
=a_{n-1}\otimes b_1,&
c(b_1\otimes b_1)
&
=-b_1\otimes b_1.
\end{align*}
\subsection{ The Yetter-Drinfeld module $M(\Oo_{gh},\epsilon)$}
As in the case $M(\Oo_{gh},\sign)$, we write 
\begin{align*}
a_0=1\otimes x,\quad
a_n=h^n\otimes x, \quad
b_n=gh^n\otimes x.
\end{align*}
Then we have the action and coaction 
\begin{align*}
&g\cdot a_0=a_1, &
&g\cdot a_n=b_n,&
&g\cdot b_n=a_n,\\
&h\cdot a_n=a_{n+1},&
&h\cdot b_{n+1}=b_n,&
&h\cdot b_1=a_1,\\
&\delta(a_0)=gh\otimes a_0,&
&\delta(a_n)=h^{2n-1}g\otimes a_n,&
&\delta(b_n)=gh^{2n-1}\otimes b_n.
\end{align*}

The braiding structure is
\begin{align*}
&c(a_m\otimes a_n)
=\begin{cases}
b_{n-2m+1}\otimes a_m,& n>2m-1\\
a_{2m-n}\otimes a_m,&n\leq 2m-1
\end{cases},&
&c(a_m\otimes b_n)
=a_{n+1}\otimes a_m,\\
&c(b_m\otimes b_n)
=\begin{cases}
a_{n-2m+1}\otimes b_m,& n>2m-1\\
-b_{2m-n+1}\otimes b_m,& n\leq 2m-1
\end{cases},&
&c(a_m\otimes a_0)
=a_{2m}\otimes a_m,\\
&c(a_m\otimes b_1)
=a_{2m}\otimes a_m,&
&c(b_m\otimes a_n)
=b_{n+2m-1}\otimes b_m,\\
&c(b_m\otimes a_0)
=b_{2m-1}\otimes b_m,&
&c(b_m\otimes b_1)
=b_{2m-1}\otimes b_m,\\
&c(a_0\otimes a_0)
=b_1\otimes a_0,&
&c(a_0\otimes a_n)
=b_{n+1}\otimes a_0,\\
&c(a_0\otimes b_n)
=a_{n-1}\otimes a_0,&
&c(a_0\otimes b_1)
=b_1\otimes a_0,\\
&c(b_1\otimes a_0)
=b_1\otimes b_1,&
&c(b_1\otimes a_n)
=b_{n+1}\otimes b_1,\\
&c(b_1\otimes b_n)
=a_{n-1}\otimes b_1,&
&c(b_1\otimes b_1)
=b_1\otimes b_1.
\end{align*}
It is easy to see that $\dim M(\Oo_{gh}, \epsilon)=\dim M(\Oo_{gh}, \sign)=\infty$. For the GK-dimension of $\Bb(\Oo_{gh}, \rho)$, we have
\begin{prop}
$\Gkdim \Bb(\Oo_{gh}, \rho)=\infty$ for $\rho=\sign$ and 
$\rho=\epsilon$.
\end{prop}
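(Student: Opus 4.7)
The plan is to mirror the argument used for Proposition \ref{prop 4.1}, since the statement here is formally identical once one observes that the conjugacy class $\Oo_{gh}$ is infinite. The elaborate braiding tables computed above are not actually needed for this specific GK-dimension statement; they are recorded in anticipation of later work on the lifting/defining relations, but the infiniteness conclusion follows from much softer information, namely the support alone.

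First I would note that $\DD_{\infty} = \langle g, h \rangle$ is finitely generated, so the hypothesis of Lemma \ref{lem 2.6} is satisfied. Second, I would point out that the support of $M(\Oo_{gh}, \rho)$ is exactly $\Oo_{gh} = \{h^{2k-1}g, gh^{2k-1} \mid k\in \NN\}$, which is evidently infinite (one can read this off directly from the description of the conjugacy classes given in the preliminaries). Applying Lemma \ref{lem 2.6} with $M = M(\Oo_{gh}, \rho)$ then yields
\begin{equation*}
\Gkdim\bigl(\Bb(\Oo_{gh}, \rho)\# \kk \DD_{\infty}\bigr) = \infty,
\end{equation*}
for both $\rho = \sign$ and $\rho = \epsilon$.

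Third, I would invoke the general inequality $\Gkdim(A\# H) \leq \Gkdim A + \Gkdim H$ quoted after \cite[Lemma 3.10]{G1}. Since $\DD_{\infty}$ contains $\langle h\rangle \cong \ZZ$ as an index-two subgroup, $\kk\DD_{\infty}$ has polynomial growth, so $\Gkdim \kk\DD_{\infty} < \infty$. Consequently the only way the left-hand side can be infinite is for $\Gkdim \Bb(\Oo_{gh}, \rho) = \infty$, which is exactly the desired conclusion.

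I do not anticipate any substantive obstacle: the proof is a one-line application of Lemma \ref{lem 2.6} combined with the subadditivity of GK-dimension under smash products, and is essentially a verbatim copy of Proposition \ref{prop 4.1} with $\Oo_g$ replaced by $\Oo_{gh}$. The only thing to be careful about is to state explicitly that the argument works uniformly in $\rho$, since it uses only $\supp M$ and not the module structure.
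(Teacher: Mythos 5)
Your proposal is correct and is exactly the paper's argument: the paper proves this proposition by declaring it ``similar to the proof of Proposition \ref{prop 4.1}'', which is precisely the application of Lemma \ref{lem 2.6} to the infinite conjugacy class $\Oo_{gh}$ followed by subadditivity of GK-dimension under the smash product with $\kk\DD_{\infty}$. Your additional remarks (finite generation of $\DD_{\infty}$, polynomial growth of $\kk\DD_{\infty}$, uniformity in $\rho$) just make explicit what the paper leaves implicit.
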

\begin{proof}
Similar to the proof of Proposition \ref{prop 4.1}.
\end{proof}

\section{ The Nichols algebra $\Bb(\Oo_1,\rho)$}\label{sec:6}
To determine the Nichols algebras associated to the conjugacy class $\Oo_1$,
we need to find all the left simple $\DD_{\infty}$-modules. 
Let $S$ be any left simple $\DD_{\infty}$-module. 
Then $\End_{\mathbbm{k} \mathbb{D}_{\infty}}(S)=\kk \id_S$ by Schur's lemma. 
For any $a\in Z(\mathbbm{k}\DD_{\infty})$, the map $f_a: S\to S, s\mapsto a\cdot s$, is a module
map. So $(h+h^{-1})\cdot s=\lambda s$ for some $\lambda\in \kk$, and hence 
the representation 
\begin{equation*}
\begin{tikzcd}
\rho: \kk\DD_{\infty}\arrow{r}&\End(S)
\end{tikzcd}
\end{equation*}
induces a representation
\begin{equation*}
\begin{tikzcd}
\bar{\rho}: \kk\DD_{\infty}/\langle h+h^{-1}-\lambda\rangle\arrow{r}&\End(S).
\end{tikzcd}
\end{equation*}
In other words, every simple left $\kk\DD_{\infty}$-module is a
 simple left $\kk\DD_{\infty}/\langle h+h^{-1}-\lambda\rangle$-module. 
\subsection{ Representations of $\mathbbm{k}\DD_{\infty}/\langle h+h^{-1}-\lambda\rangle$}
\begin{lem}
The center of $\mathbbm{k}\DD_{\infty}$ is $\mathbbm{k}[h+h^{-1}]$, and for $\lambda\in \mathbbm{k}$, 
\begin{align*}
\dim \mathbbm{k}\DD_{\infty}/\langle h+h^{-1}-\lambda\rangle<\infty.
\end{align*}
\end{lem}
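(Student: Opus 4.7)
The plan is to prove the two assertions separately, and neither step presents a serious obstacle. For the center, I would start from the normal form that every element of $\kk\DD_{\infty}$ can be written uniquely as $z = \sum_{n\in\ZZ} a_n h^n + \sum_{n\in\ZZ} b_n gh^n$ with finitely many nonzero coefficients, using the relation $gh = h^{-1}g$ to push all copies of $g$ to the left. Imposing $hz = zh$ and $gz = zg$ then translates into explicit linear constraints on $\{a_n\}$ and $\{b_n\}$. From the conjugation rules in the preliminaries one has $g \rhd h^n = h^{-n}$ and $g \rhd gh^n = gh^{-n}$, yielding $a_n = a_{-n}$ and $b_n = b_{-n}$; a direct check also gives $h \rhd gh^n = gh^{n-2}$, which forces $b_n = b_{n-2}$. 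Since the support of $z$ is finite, the two-periodicity kills every $b_n$.

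Next I would identify the resulting subspace $\{\,z = \sum_n a_n h^n : a_n = a_{-n}\,\}$ with $\kk[h + h^{-1}]$. The inclusion $\kk[h + h^{-1}] \subseteq Z(\kk\DD_{\infty})$ is immediate, and the reverse inclusion follows from the recursion
\[
h^{n+1} + h^{-(n+1)} = (h + h^{-1})(h^n + h^{-n}) - (h^{n-1} + h^{-(n-1)}),
\]
which inductively expresses each symmetric Laurent monomial $h^n + h^{-n}$ as a polynomial in $h + h^{-1}$.

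For the second assertion, I would work inside $A := \kk\DD_{\infty}/\langle h + h^{-1} - \lambda\rangle$. Multiplying the defining relation by $h$ gives $h^2 = \lambda h - 1$, so the image of $h$ satisfies a monic quadratic over $\kk$, and a straightforward induction reduces every $h^n$ with $n \in \ZZ$ to an element of $\kk \cdot 1 + \kk \cdot h$. Combined with $g^2 = 1$ and $gh^n = h^{-n}g$, this shows that $\{1, h, g, gh\}$ spans $A$, whence $\dim_{\kk} A \leq 4$.

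The only mildly delicate point is the first step of the center computation, where one must carefully distinguish the two strands $h^n$ and $gh^n$ under conjugation: the $h$-action is trivial on the first strand but shifts indices by $\pm 2$ on the second. Once this bookkeeping is in place, both halves of the lemma drop out with no further work.
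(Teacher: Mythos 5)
Your argument is correct, and in fact it proves more than the paper does: the paper's proof of this lemma only establishes the dimension bound (via $h^2=\lambda h-1$, $hg=\lambda g-gh$, and the induction reducing every $h^n$ to the span of $1,h$, exactly as in your second half) and never actually proves the assertion about the center. Your treatment of the center is the genuinely new content. The normal form $z=\sum_n a_nh^n+\sum_n b_ngh^n$, the constraints $a_n=a_{-n}$, $b_n=b_{-n}$ from conjugation by $g$, and $b_n=b_{n-2}$ from conjugation by $h$ (which kills all $b_n$ by finiteness of support) are all verified correctly, and the Chebyshev-type recursion
\[
h^{n+1}+h^{-(n+1)}=(h+h^{-1})(h^{n}+h^{-n})-(h^{n-1}+h^{-(n-1)})
\]
correctly identifies the symmetric Laurent polynomials with $\kk[h+h^{-1}]$. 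A marginally quicker route to the same conclusion is to recall that the center of a group algebra $\kk G$ is spanned by the class sums of the \emph{finite} conjugacy classes; for $\DD_{\infty}$ these are exactly $\{1\}$ and $\{h^{n},h^{-n}\}$, $n\geq 1$, which gives the same spanning set $\{h^{n}+h^{-n}\}$ immediately. Either way, your proof is complete and fills a gap the paper leaves open.
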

\begin{proof}
 Let $A_{\lambda}=\mathbbm{k}\DD_{\infty}/\langle h+h^{-1}-\lambda\rangle$. 
 In $A_{\lambda}$, $h+h^{-1}=\lambda$, by direct computation, the following relations hold:
 \begin{align*}
 hg=\lambda g-gh,\quad
 h^2=\lambda h-1,\quad
 h^3=\lambda h^2-h,\quad \ldots,\quad
 h^{n+1}=\lambda h^n-h^{n-1}=0.
 \end{align*}
 Therefore, $h^n$ can be spaned by $1$ and $h$ in $R$. Hence $gh^n$, 
 $h^ng$, and $gh^ng$ can be spaned by $1,g, h,gh$. We see that 
 $\dim A_{\lambda}\leq 4$.
\end{proof}

By the following lemma, we need to find all primitive othogonal idempotents of $A_{\lambda}$.
\begin{lem}\cite[Corollary 5.17]{A}
Suppose that $A_A=e_1A\bigoplus \ldots \bigoplus e_nA$ is a decomposition of $A$ into indecomposable submodules.
Every simple right $A$-module is isomorphic to one of the modules
\begin{align*}
S(1)=\topp e_1A,\quad \ldots,\quad S(n)=\topp e_nA.
\end{align*}

\end{lem}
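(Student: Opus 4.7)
The plan is to prove the classification in two steps, both exploiting that in the paper's intended application $A = A_\lambda$ is finite-dimensional (by the preceding lemma), so the Jacobson radical $J = \rad A$ is nilpotent and $A/J$ is semisimple. These are the standard semiperfectness hypotheses under which a result of this form holds, and I would state them explicitly at the start of the proof.

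Step one is to show that each $S(i) = \topp e_iA = e_iA/e_iJ$ is simple. The decomposition $A_A = \bigoplus_{i=1}^n e_iA$ induces $A/J = \bigoplus_{i=1}^n e_iA/e_iJ$ as right modules, and since $A/J$ is semisimple each summand is a finite direct sum of simples. If some $e_iA/e_iJ$ split nontrivially, then the corresponding orthogonal idempotents of $\End_A(e_iA/e_iJ)$ could be lifted modulo the nilpotent ideal to orthogonal idempotents of $\End_A(e_iA) \cong e_iAe_i$, contradicting the indecomposability of $e_iA$. Hence $e_iJ$ is the unique maximal submodule of $e_iA$ and the top is simple.

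Step two is to show that any simple right $A$-module $S$ is isomorphic to some $S(i)$. Choose $0 \neq s \in S$ and consider the surjection $\varphi \colon A \to S$, $a \mapsto sa$. Restricting to the summands gives maps $\varphi_i \colon e_iA \to S$ with $S = \sum_i \varphi_i(e_iA)$, so simplicity of $S$ forces at least one $\varphi_i$ to be surjective; its kernel must then contain the unique maximal submodule $e_iJ$ of $e_iA$, so $\varphi_i$ factors through $S(i)$ and yields an isomorphism $S(i) \cong S$. The one substantive point in the whole argument is the idempotent-lifting step in Step one, where the semiperfectness (here, finite-dimensionality) of $A$ really enters; beyond that, both directions follow from the universal property of the top of an indecomposable projective module.
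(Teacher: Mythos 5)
The paper offers no proof of this lemma at all---it is quoted verbatim from \cite[Corollary 5.17]{A} and used as a black box---so there is no argument of the paper's to compare yours against. Your proof is correct and is essentially the standard one from that reference: you rightly make explicit the hypothesis the transcribed statement omits (that $A$ is finite-dimensional, or at least semiperfect, which holds in the intended application since $\dim A_{\lambda}\leq 4$), and both steps---simplicity of $\topp e_iA$ via lifting idempotents modulo the nilpotent radical against indecomposability of $e_iA$, and realizing an arbitrary simple module as a quotient of some $e_iA$ whose kernel must be the unique maximal submodule $e_i(\rad A)$---are sound.
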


Now compute the idempotents of $A_{\lambda}$.
Let
\begin{align*}
(x_1+x_2g+x_3h+x_4gh)^2=x_1+x_2g+x_3h+x_4gh.
\end{align*}

By the following lemma, we will find the primitive idempotents of an algebra.
\begin{lem}\cite[Corollary 4.7 ]{A}
An idempotent $e\in A$ is primitive if and only if the algebra $eAe\cong \End eA$ 
has only two idempotents $0$ and $e$, that is, the algebra $eAe$ is local.
\end{lem}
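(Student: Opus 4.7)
The plan is to establish the stated equivalence by matching idempotent decompositions of $e$ in $A$ with idempotents of the corner ring $eAe$. First I would verify the ring isomorphism $eAe \cong \End eA$ used in the statement (with $eA$ viewed as a right $A$-module): sending $x \in eAe$ to the left-multiplication map $y \mapsto xy$ gives a ring homomorphism whose inverse sends an endomorphism $f$ to $f(e)$. The element $f(e)$ lies in $eAe$ since $f(e) \in eA$ by construction, and $f(e) = f(e \cdot e) = f(e)\, e$ by right $A$-linearity. Under this identification $e$ corresponds to $\id_{eA}$, and idempotents of $eAe$ correspond exactly to projection endomorphisms of $eA$.

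Next I would show that $e$ is primitive precisely when $eAe$ has no idempotent other than $0$ and $e$. Given a nontrivial idempotent $f \in eAe$, the defining relation $f = efe$ forces $ef = fe = f$, so $e - f$ is a second idempotent orthogonal to $f$: $f(e-f) = f - f^2 = 0$ and $(e-f)f = 0$. Hence $e = f + (e-f)$ is a nontrivial orthogonal decomposition of $e$ in $A$, contradicting primitivity. Conversely, suppose $e = e_1 + e_2$ with $e_1, e_2$ nonzero orthogonal idempotents in $A$; multiplying this equation by $e_i$ on either side and using orthogonality together with $e_i^2 = e_i$ yields $e\, e_i = e_i = e_i\, e$, so each $e_i$ lies in the corner $eAe$ and is a nontrivial idempotent there.

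Finally, I would translate the condition "only idempotents $0$ and $e$" into "$eAe$ is local" by invoking the standard fact that a finite-dimensional (hence Artinian) algebra with identity is local whenever it contains no nontrivial idempotents; this applies in the intended setting, since $A = A_\lambda$ is finite dimensional and therefore so is $eAe$. The main obstacle, such as it is, is purely bookkeeping: checking that any $f \in eAe$ automatically satisfies $ef = fe = f$ so that $e - f$ genuinely serves as an orthogonal complement, and confirming that the orthogonal summands of a decomposition of $e$ in $A$ always fall inside the corner $eAe$. No deeper difficulty is present.
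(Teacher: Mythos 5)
Your proof is correct, and since the paper states this lemma only as a citation of \cite[Corollary 4.7]{A} without giving any argument, there is nothing in the paper to diverge from: your argument is precisely the standard one from that reference, matching orthogonal idempotent decompositions of $e$ with nontrivial idempotents of the corner ring $eAe \cong \End eA$. Your added remark that the equivalence between ``no nontrivial idempotents'' and ``local'' needs the algebra to be finite dimensional (Artinian) is a worthwhile precision, and it does hold in the paper's application since $A_\lambda$ is finite dimensional.
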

Taking $x_3=0$, $x_2=\pm \dfrac{1}{2}, x_4=0$, we obtain
\begin{align*}
e_1=\dfrac{1}{2}(1+g),\quad
 e_2=\dfrac{1}{2}(1-g),\quad
1=e_1+e_2,
\end{align*}
which is a decomposition of 1.
By direct computation, the following equalities hold:
\begin{align*}
e_1A_{\lambda}=k(g+1)+k(gh+h),\quad
e_2A_{\lambda}=k(g-1)+k(gh-h).
\end{align*}
Since $\dim A_{\lambda}<\infty$, the Jacobson radical $\rad e_1A_{\lambda}=\Nil e_1A_{\lambda}$ is 
the nil ideal, we need to find all nilpotent elements of $e_1A_{\lambda}$ and $e_2A_{\lambda}$. \\
Let $a=1+g, b=h+gh$.
Then we have
\begin{align*}
a^2=2a,\quad
ab=2b,\quad
ba=\lambda a,\quad
b^2=\lambda b.
\end{align*}
\begin{lem}
 $(x_1a+x_2b)^n=(2x_1+\lambda x_2)^{n-1}(x_1a+x_2b)$.
\end{lem}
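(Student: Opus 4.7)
The plan is to first show the base case $n=2$ by a direct one-line computation, then immediately promote this to the general statement by a trivial induction.

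Set $c = x_1 a + x_2 b$ and let $\mu = 2x_1 + \lambda x_2$. First I would compute $c \cdot a$ and $c \cdot b$ separately, using the four given relations $a^2 = 2a$, $ab = 2b$, $ba = \lambda a$, $b^2 = \lambda b$. One immediately obtains
\begin{align*}
c \cdot a &= x_1 a^2 + x_2 b a = 2 x_1 a + \lambda x_2 a = \mu\, a, \\
c \cdot b &= x_1 a b + x_2 b^2 = 2 x_1 b + \lambda x_2 b = \mu\, b.
\end{align*}
So both $a$ and $b$ are left eigenvectors of left multiplication by $c$ with the common eigenvalue $\mu$. Combining linearly,
\begin{align*}
c^2 = c(x_1 a + x_2 b) = x_1 (c \cdot a) + x_2 (c \cdot b) = \mu (x_1 a + x_2 b) = \mu\, c.
\end{align*}

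With the relation $c^2 = \mu c$ in hand, the full statement follows by straightforward induction on $n$. For $n = 1$ the claim reduces to $c = c$, and assuming $c^{n-1} = \mu^{n-2} c$, one computes
\begin{align*}
c^n = c^{n-1} \cdot c = \mu^{n-2}\, c \cdot c = \mu^{n-2} \cdot \mu c = \mu^{n-1} c,
\end{align*}
which is the desired formula.

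There is no real obstacle here; the only thing to notice is the convenient coincidence that the eigenvalue $\mu = 2x_1 + \lambda x_2$ is the same for both $a$ and $b$, which is what makes $c$ an eigenvector of itself and collapses $c^n$ to a scalar multiple of $c$. This observation will be used in the sequel to identify nilpotent elements of $e_1 A_\lambda$ (those $c$ with $\mu = 0$), and hence to pin down $\rad e_1 A_\lambda$ and the primitive idempotents of $A_\lambda$.
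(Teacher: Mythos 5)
Your proof is correct and takes essentially the same route as the paper: both reduce to the identity $c^2=(2x_1+\lambda x_2)\,c$ computed directly from the four relations $a^2=2a$, $ab=2b$, $ba=\lambda a$, $b^2=\lambda b$, and then finish by a trivial induction. Your packaging via the common eigenvalue of left multiplication by $c$ on $a$ and $b$ is a slightly tidier way to organize the same expansion, but it is not a different argument.
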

\begin{proof}
If $n=2$, then
\begin{align*}
(x_1a+x_2b)^2
&=x_1^2 a^2+x_1x_2ab+x_1x_2ba+x_2^2b^2
=2x_1^2a+2x_1x_2b+
\lambda x_1 x_2a+\lambda x_2^2b\\
&=(2x_1^2+\lambda x_1x_2)a
+(\lambda x_2^2+2x_1 x_2)b
=(2x_1+\lambda x_2)x_1a+(2x_1+\lambda x_2)x_2b.
\end{align*}
By induction on $n$,
\begin{align*}
(x_1a+x_2b)^{n+1}
&=(x_1a+x_2b)^{n}(x_1a+x_2b)
=(2x_1+\lambda x_2)^{n}(x_1a+x_2b).
\end{align*}
\end{proof}
Let $[x_1(g+1)+x_2(gh+h)]^n=0$.
We have $x_1=-\frac{\lambda}{2}x_2$. Therefore, the set of 
all nilpotent elements is 
\begin{align*}
\Nil e_1A_{\lambda}
=\mathbbm{k}(-\dfrac{\lambda}{2}a+b).
\end{align*}
Therefore,
$\rad e_1A_{\lambda}=\mathbbm{k}(-\frac{\lambda}{2}a+b)$.

Let $c=1-g,d=h-gh$. Then
\begin{align*}
c^2=2c,\quad
d^2=\lambda d, \quad
cd=2d,\quad
dc=\lambda c.
\end{align*}

\begin{lem}
$(x_1c+x_2d)^n=(2x_1+\lambda x_2)^{n-1}(x_1c+x_2d)$
\end{lem}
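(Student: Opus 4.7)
The plan is to imitate the proof of the preceding lemma, which is legitimate because the multiplication table for $c,d$ is structurally identical to the one for $a,b$: comparing $a^2=2a,\ b^2=\lambda b,\ ab=2b,\ ba=\lambda a$ with $c^2=2c,\ d^2=\lambda d,\ cd=2d,\ dc=\lambda c$, we see the roles of the two mixed products are swapped, but this is immaterial for an induction that only uses the combined relations $c^2=2c$, $cd=2d$, $dc=\lambda c$, $d^2=\lambda d$. So before starting the induction I would just confirm these four relations in $A_\lambda$ from $g^2=1$, $ghg=h^{-1}$, and $h+h^{-1}=\lambda$.

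For the base case $n=2$, I would expand
\begin{align*}
(x_1c+x_2d)^2 &= x_1^2 c^2 + x_1x_2\,cd + x_2x_1\,dc + x_2^2 d^2 \\
&= 2x_1^2 c + 2x_1x_2 d + \lambda x_1x_2 c + \lambda x_2^2 d \\
&= (2x_1+\lambda x_2)\,x_1 c + (2x_1+\lambda x_2)\,x_2 d \\
&= (2x_1+\lambda x_2)(x_1c+x_2d),
\end{align*}
which is the $n=2$ case and, more importantly, says that right-multiplication by $(x_1c+x_2d)$ acts on the one-dimensional span of $x_1c+x_2d$ as the scalar $2x_1+\lambda x_2$.

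The inductive step is then immediate: assuming the formula at level $n$, multiply on the right by $(x_1c+x_2d)$ and apply the $n=2$ identity to get
\begin{align*}
(x_1c+x_2d)^{n+1} &= (2x_1+\lambda x_2)^{n-1}(x_1c+x_2d)^2 \\
&= (2x_1+\lambda x_2)^{n-1}(2x_1+\lambda x_2)(x_1c+x_2d) \\
&= (2x_1+\lambda x_2)^{n}(x_1c+x_2d).
\end{align*}

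There is no genuine obstacle here; the lemma is a direct algebraic identity whose proof is nearly line-for-line the one given for $(x_1a+x_2b)^n$. The only place to exercise a bit of care is the sign/coefficient bookkeeping in the four multiplications for $c$ and $d$, which is where the parameter $\lambda$ enters; once those four relations are in hand the induction is formal. As in the previous case, the upshot is that $\rad e_2A_\lambda = \kk\bigl(-\tfrac{\lambda}{2}c+d\bigr)$, obtained by solving $2x_1+\lambda x_2=0$, and this is what will subsequently be used to describe the primitive idempotents and simple modules of $A_\lambda$.
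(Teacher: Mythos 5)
Your proof is correct and is exactly the argument the paper uses for the analogous identity $(x_1a+x_2b)^n=(2x_1+\lambda x_2)^{n-1}(x_1a+x_2b)$; the paper omits the proof of this second lemma precisely because it is the same base-case-plus-induction computation. One small slip: the mixed products are not in fact swapped relative to the $a,b$ case (the relations $cd=2d$, $dc=\lambda c$ have exactly the same form as $ab=2b$, $ba=\lambda a$), but as you note this has no bearing on the computation.
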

Let $(x_1c+x_2d)^n=0$. We have
$x_1=-\dfrac{\lambda}{2}x_2$.
Therefore, the set of all nilpotent elements is
\begin{align*}
\Nil e_2 A_{\lambda}=\mathbbm{k}(-\dfrac{\lambda }{2}c+d),
\end{align*}
and $\rad e_2 A_{\lambda} 
=\mathbbm{k}(-\frac{\lambda }{2}c+d)$.

Consequently,
\begin{lem}\label{lem6.6}
Let
$e_1=\frac{1}{2}(1+g)$ and $e_2=\frac{1}{2}(1-g)$.
Then the simple right modules of $A_{\lambda}$ are
\begin{align*}
e_1A_{\lambda}/\mathbbm{k}(-\dfrac{\lambda }{2}a+b) 
\quad\text{and}\quad
e_2A_{\lambda}/\mathbbm{k}(-\dfrac{\lambda }{2}c+d),
\end{align*}
where $\lambda\in \kk$, $a=1+g,  b=h+gh, c=1-g, d=h-gh$.\\
In particular, the simple modules of $A_0$ are
$e_1A_0$ and $e_2A_0.$
\end{lem}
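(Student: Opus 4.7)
The plan is to combine the orthogonal idempotent decomposition already produced with the radical computations already in hand by invoking the two cited corollaries. First, a direct multiplication in $A_\lambda$ confirms $e_1^2 = e_1$, $e_2^2 = e_2$, $e_1 e_2 = e_2 e_1 = 0$, and $e_1 + e_2 = 1$, so as a right $A_\lambda$-module we have the Peirce decomposition $A_\lambda = e_1 A_\lambda \oplus e_2 A_\lambda$.

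The substantive point is to verify that $e_1$ and $e_2$ are primitive, so that the cited Corollary~5.17 applies to this decomposition. By the cited Corollary~4.7 it suffices to show each $e_i A_\lambda e_i$ is local, and the natural approach is to establish the stronger statement $e_i A_\lambda e_i = \kk e_i$. Using $e_1 = \tfrac{1}{2}a$ together with the relations $a^2 = 2a$, $ab = 2b$, $ba = \lambda a$, $b^2 = \lambda b$ (and analogously $e_2 = \tfrac{1}{2}c$ with its $c,d$-counterparts), I would compute $e_i \cdot x \cdot e_i$ for $x$ running over the four-element basis $\{1, g, h, gh\}$ of $A_\lambda$ and check that each product lies in $\kk e_i$; then $e_i A_\lambda e_i \cong \kk$ is a field, hence local, and $e_i A_\lambda$ is indecomposable.

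With primitivity in place, Corollary~5.17 identifies every simple right $A_\lambda$-module as $\topp(e_i A_\lambda) = e_i A_\lambda/\rad(e_i A_\lambda)$ for some $i \in \{1, 2\}$, and feeding in the radicals $\kk(-\tfrac{\lambda}{2}a + b)$ and $\kk(-\tfrac{\lambda}{2}c + d)$ computed in the preceding paragraphs produces the two quotients displayed in the lemma. For the ``in particular'' assertion, when $\lambda = 0$ the relations of $A_0$ degenerate to $g^2 = 1$, $h^2 = -1$, $gh + hg = 0$, from which the four-dimensional algebra $A_0$ is readily seen to be semisimple, so its radical vanishes and the modules $e_1 A_0, e_2 A_0$ are themselves simple right $A_0$-modules.

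The delicate step is the primitivity check, i.e.\ pinning down $e_i A_\lambda e_i = \kk e_i$ uniformly in $\lambda$, together with the $\lambda = 0$ observation that the would-be radical collapses to zero; once these are in hand, the remainder is an assembly of Corollary~5.17 with the radical identifications already performed earlier in the section.
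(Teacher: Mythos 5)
Your architecture is the same as the paper's: Peirce decomposition $A_\lambda=e_1A_\lambda\oplus e_2A_\lambda$, primitivity of the $e_i$ via the cited Corollary~4.7, the cited Corollary~5.17 to list the simples as tops, and then the radical identification from the preceding paragraphs. The one step you add that the paper leaves implicit --- checking $e_iA_\lambda e_i=\kk e_i$ --- is correct and easy: $e_1A_\lambda e_1=\kk ae_1+\kk be_1=\kk a$ because $a^2=2a$ and $ba=\lambda a$, and likewise for $e_2$. That part is fine.

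The genuine gap is the step ``feeding in the radicals computed in the preceding paragraphs.'' What those paragraphs compute is the set of nilpotent \emph{elements} of the right ideal $e_1A_\lambda$; in a noncommutative algebra this set is neither a submodule nor the radical, and here it visibly is not one: using $a\cdot h=b$ and $b\cdot h=\lambda b-a$ one gets
\begin{align*}
\bigl(-\tfrac{\lambda}{2}a+b\bigr)\cdot h=-a+\tfrac{\lambda}{2}b,
\end{align*}
which is proportional to $-\tfrac{\lambda}{2}a+b$ only when $\lambda^2=4$. So for $\lambda\notin\{2,-2\}$ the line $\kk\bigl(-\tfrac{\lambda}{2}a+b\bigr)$ is not a right submodule and the displayed quotient is not even defined. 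In fact, for every $\lambda\neq\pm2$ the polynomial $h^2-\lambda h+1$ has distinct roots interchanged by $g$, so $A_\lambda\cong M_2(\kk)$ is simple and $e_iA_\lambda$ is already a two-dimensional simple module with zero radical. Your own treatment of $\lambda=0$ exposes the inconsistency: you correctly argue that $A_0$ is semisimple, hence $\rad(e_1A_0)=0$, yet the imported formula gives $\rad(e_1A_0)=\kk b$ with $b=h+gh\neq0$ in $A_0$; saying the ``would-be radical collapses to zero'' does not reconcile these --- $b$ is a nonzero nilpotent element that simply does not lie in the radical. So the assembly step fails for all $\lambda$ with $\lambda^2\neq4$ (where the first display of the lemma, read literally, cannot hold), and at $\lambda=0$ your write-up asserts two contradictory values for the same radical even though the ``in particular'' conclusion is true. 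To repair the argument you would need to compute $\rad A_\lambda$ itself (e.g.\ via the eigenstructure of $h$), which shows the stated conclusion is valid exactly for $\lambda\in\{0,2,-2\}$.
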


\subsection{The simple left $\kk \DD_{\infty}$-modules}
We can consider the left simple modules of $\kk\DD_{\infty}$. Let
\begin{align*}
e_1&=\dfrac{1}{2}(1+g),\quad
e_2=\dfrac{1}{2}(1-g).
\end{align*}
Then
\begin{align*}
A_{\lambda}e_1&= \mathbbm{k}(1+g)+\mathbbm{k}(h^{-1}+gh),\quad&
A_{\lambda}e_2=\mathbbm{k}(1-g)+\mathbbm{k}(h^{-1}-gh).
\end{align*}
Let  $a=1+g, b=h^{-1}+gh$, $c=1-g,d=h^{-1}-gh$. 
Then we have
\begin{align*}
a^2&=2a,&
ab&=\lambda a,&
ba&=2b,&
b^2&=\lambda b,\\
c^2&=2c,&
cd&=\lambda c,&
dc&=2d,&
d^2&=\lambda d.
\end{align*}

By the first paragraph of this section and Lemma \ref{lem6.6} we have
\begin{lem}
The simple left $\kk\DD_{\infty}$-modules are
\begin{align*}
A_{\lambda}e_1/\mathbbm{k}(-\dfrac{\lambda }{2}a+b) \quad \text{and}\quad
A_{\lambda}e_2/\mathbbm{k}(-\dfrac{\lambda }{2}c+d).
\end{align*}
Precisely, if $\lambda\neq 0$, then the corresponding simple modules are
\begin{align*}
S_{\lambda}^{+}=\mathbbm{k}a,\quad
S_{\lambda}^{-}=\mathbbm{k}c,
\end{align*}
where the module structures are
\begin{align*}
g\cdot a=a,\quad
h\cdot a=\dfrac{\lambda}{2}a,\quad
g\cdot c=-c,\quad
h\cdot c=\dfrac{\lambda}{2}c.
\end{align*}
If $\lambda=0$, then
\begin{align*}
S_0^{+}&=A_0e_1
             =\mathbbm{k}(1+g)+\mathbbm{k}(h^{-1}+gh),\\
S_0^{-}&=A_0e_2
             =\mathbbm{k}(1-g)+\mathbbm{k}(h^{-1}-gh).
\end{align*}
The module structures are
\begin{align*}
g\cdot a&=a,&
h\cdot a&=-b,&
g\cdot b&=-b,&
h\cdot b&=a,\\
g\cdot c&=-c,&
g\cdot d&=d,&
h\cdot c&=-d,&
h\cdot d&=c.
\end{align*}
\end{lem}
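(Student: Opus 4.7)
The plan is to apply the reduction at the start of Section~\ref{sec:6}: every simple left $\kk\DD_\infty$-module factors through some $A_\lambda=\kk\DD_\infty/\langle h+h^{-1}-\lambda\rangle$, so the classification reduces to finding the simple left $A_\lambda$-modules for each $\lambda\in\kk$. Using the orthogonal idempotent decomposition $1=e_1+e_2$ established earlier in this section, the left-module analogue of the corollary invoked before Lemma~\ref{lem6.6} gives $A_\lambda=A_\lambda e_1\oplus A_\lambda e_2$, and every simple left $A_\lambda$-module arises as the top of one of these two indecomposable summands.

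Next I would identify $A_\lambda e_1=\kk a+\kk b$ and $A_\lambda e_2=\kk c+\kk d$ with the generators as stated, by a direct computation using $ghg=h^{-1}$ together with $h+h^{-1}=\lambda$ to collapse the a priori larger spanning set to two vectors. The multiplication relations $a^2=2a$, $ab=\lambda a$, $ba=2b$, $b^2=\lambda b$ (and the parallel ones for $c,d$) are exactly what is needed to run the radical computation of Lemma~\ref{lem6.6} on the opposite side, yielding $\rad(A_\lambda e_1)=\kk(-\tfrac{\lambda}{2}a+b)$ and $\rad(A_\lambda e_2)=\kk(-\tfrac{\lambda}{2}c+d)$.

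Reading off the simple quotients requires a case split on $\lambda$. When $\lambda\neq 0$, the quotient $A_\lambda e_1/\rad$ is one-dimensional with basis the image of $a$; expanding $h(1+g)=h+gh^{-1}$ in the basis $\{a,b\}$ via $h^{-1}=\lambda-h$ gives $h\cdot a=\lambda a-b\equiv\tfrac{\lambda}{2}a$ modulo the submodule, together with the immediate $g\cdot a=a$, yielding the stated structure of $S_\lambda^+$; the parallel argument for $e_2$ gives $S_\lambda^-$. When $\lambda=0$, the ``radical'' $\kk(-\tfrac{\lambda}{2}a+b)$ degenerates to $\kk b$, which fails to be a submodule (for instance $h\cdot b=a\notin\kk b$), so $A_0 e_1$ and $A_0 e_2$ are themselves simple, and the tabulated actions of $g,h$ on $\{a,b,c,d\}$ can be read off directly from the definitions.

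The main obstacle I foresee is the submodule check in the generic case: verifying that $\kk(-\tfrac{\lambda}{2}a+b)$ is actually preserved by the full left action of $A_\lambda$ (in particular by $h$), not merely by the nilpotence argument inherited from Lemma~\ref{lem6.6}. The case split $\lambda=0$ versus $\lambda\neq 0$ in the statement reflects precisely where this invariance under the full algebra action is expected to succeed or fail.
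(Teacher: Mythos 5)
You follow the paper's route, and you have put your finger on exactly the right spot: the step you defer as ``the main obstacle''---checking that $\kk(-\tfrac{\lambda}{2}a+b)$ is stable under the full left action---is not a formality but a step that fails. With $a=1+g$, $b=h^{-1}+gh$ one computes $h\cdot a=\lambda a-b$ and $h\cdot b=1+hgh=1+g=a$, hence
\begin{align*}
h\cdot\Bigl(-\tfrac{\lambda}{2}a+b\Bigr)=\Bigl(1-\tfrac{\lambda^{2}}{2}\Bigr)a+\tfrac{\lambda}{2}\,b,
\end{align*}
which lies on the line $\kk(-\tfrac{\lambda}{2}a+b)$ if and only if $1-\tfrac{\lambda^{2}}{2}=-\tfrac{\lambda^{2}}{4}$, i.e.\ $\lambda^{2}=4$. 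So the quotient defining $S_{\lambda}^{+}$ is a module only for $\lambda=\pm 2$. The same conclusion is visible without computation: on a one-dimensional module $g$ acts by a scalar, so $ghg=h^{-1}$ forces $h$ to act by $\mu$ with $\mu=\mu^{-1}$, hence $\mu=\pm1$ and $\lambda=\mu+\mu^{-1}=\pm2$; for any other $\lambda$ the prescription $g\cdot a=a$, $h\cdot a=\tfrac{\lambda}{2}a$ does not even satisfy the relation $h+h^{-1}=\lambda$ (e.g.\ for $\lambda=1$ it would give $h^{2}\cdot a=\tfrac14 a$ while $h^{2}=h-1$ gives $-\tfrac12 a$).

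The root of the problem is the identification, imported from Lemma \ref{lem6.6}, of the radical of $e_{1}A_{\lambda}$ with its set of nilpotent elements: the nilpotent elements of a one-sided ideal need not form a submodule, and here they do not. For $\lambda\neq\pm2$ the polynomial $t^{2}-\lambda t+1$ has distinct roots, so $\kk[h^{\pm1}]/(h+h^{-1}-\lambda)\cong\kk\times\kk$ with $g$ interchanging the two factors, whence $A_{\lambda}\cong M_{2}(\kk)$ is semisimple and $A_{\lambda}e_{1}\cong A_{\lambda}e_{2}$ is its unique simple module, of dimension $2$---exactly as in your $\lambda=0$ analysis, which is the one case you handle correctly (though you should rule out $\kk a$ as well as $\kk b$, i.e.\ check that neither eigenline of $g$ is $h$-stable). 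The true dichotomy is therefore $\lambda=\pm2$ (two one-dimensional simples, recovering the four characters of $\DD_{\infty}$) versus $\lambda\neq\pm2$ (a single two-dimensional simple, so in particular $S_{0}^{+}\cong S_{0}^{-}$), not $\lambda=0$ versus $\lambda\neq0$. Your argument reproduces the paper's proof and therefore shares its defect, but the gap is genuine: the submodule claim, and with it the stated list of simple modules for $\lambda\neq 0,\pm 2$, cannot be repaired as written.
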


\begin{proof}
If  $\lambda \neq 0$, then the module structures of $S_0^{+}$ and $S_{0}^{-}$ are
\begin{align*}
g\cdot a&=g(1+g)=g+1=a,\\
h\cdot a&=h(1+g)=h+hg=\lambda-h^{-1}+(\lambda g-gh)\\
           &=\lambda (1+g)-(h^{-1}-gh)
            =\lambda a-b
          =\dfrac{\lambda}{2}a,\\
g\cdot c&=g(1-g)=g-1=-c,\\
h\cdot c&=h(1-g)=h-hg=(\lambda-h^{-1})-(\lambda-gh)\\
&=\lambda (1-g)-(h^{-1}-gh)
=\lambda c-d
=\dfrac{\lambda}{2}c.
\end{align*}
If $\lambda =0$, then the module structures of $S_0^{+}$ and 
$S_{0}^{-}$ are
\begin{align*}
g\cdot a&=g(1+g) =g+1 =a,\\
h\cdot a&=h(1+g)
             =h+hg
             =(-h^{-1})-gh
             =-b,\\
g\cdot b&=g(h^{-1}+gh)
             =gh^{-1}+h
            =-gh-h^{-1}=-b,\\
h\cdot b&=h(h^{-1}+gh)
             =1+hgh
             =a,\\
g\cdot c&=g(1-g)=g-1=-c\\
g\cdot d&=g(h^{-1}-gh)
             =gh^{-1}-h
             =hg-(-h^{-1})
             =h^{-1}-gh
             =d,\\
h\cdot c&=h(1-g)=h-hg
              =-h^{-1}-(-gh)
              =-d,\\
h\cdot d
&=h(h^{-1}-gh)
             =1-hgh
             =1-g
            =c.
\end{align*}
\end{proof}
\subsection{The irreducible Yetter-Drinfeld modules }
Let $G=\DD_{\infty}$. From Lemma \ref{lem.1.4.16} and Lemma \ref{cor.1.4.18} we
obtain all the irreducible Yetter-Drinfeld modules in ${_G^G\YD(\Oo_1)}$:
\begin{align*}
M(1,S_{0}^{+})&=\mathbbm{k}G\otimes_{\mathbbm{k}G}S_{0}^{+}
=1\otimes \mathbbm{k}a+1\otimes \mathbbm{k}b,\\
M(1,S_{0}^{-})&=\mathbbm{k}G\otimes_{\mathbbm{k}G}S_{0}^{-}
=1\otimes \mathbbm{k}c+1\otimes \mathbbm{k}d,\\
M(1,S_{\lambda}^{+})&=\mathbbm{k}G\otimes_{\mathbbm{k}G}S_{\lambda}^{+}
=1\otimes \mathbbm{k}a,\\
M(1,S_{\lambda}^{-})&=\mathbbm{k}G\otimes_{\mathbbm{k}G}S_{\lambda}^{-}
=1\otimes \mathbbm{k}c.
\end{align*}
Let $v_1=1\otimes a$ and $v_2=1\otimes b$. Then
the $A_{\lambda}$-module structure is
\begin{align*}
&g\cdot v_1=g\cdot(1\otimes a)=1\otimes g\cdot a=1\otimes a=v_1,\\
&g\cdot v_2=g\cdot(1\otimes b)=1\otimes g\cdot b=1\otimes (-b)=-v_2,\\
&h\cdot v_1=h\cdot(1\otimes a)=1\otimes h\cdot a=1\otimes (-b)=-v_2,\\
&h\cdot v_2=h\cdot(1\otimes b)=1\otimes h\cdot b=1\otimes a=v_1.
\end{align*}
The comodule structure is
\begin{align*}
\delta(v_1)&=\delta(1\otimes a)=1\otimes (1\otimes a)=1\otimes v_1,\quad
\delta(v_2)=\delta(1\otimes b)=1\otimes (1\otimes b)=1\otimes v_2.
\end{align*}
The braiding structure is
\begin{align*}
c(v_1\otimes v_1)
&=v_1\otimes v_1,\quad
c(v_1\otimes v_2)
=v_2\otimes v_1,\\
c(v_2\otimes v_1)
&=v_1\otimes v_2,\quad
c(v_2\otimes v_2)
=v_2\otimes v_2.
\end{align*}
Therefore, the braiding matrix is
\begin{align*}
\left[
\begin{array}{cccc}
1&1\\
1&1
\end{array}
\right].
\end{align*}

Hence $\Gkdim B(V)=2$ by \cite[Example 31]{A1}.

Consider
$\mathbbm{k}G\otimes_{\mathbbm{k}G}S_{0}^{-}=1\otimes \mathbbm{k}c+1\otimes \mathbbm{k}d$.
Let $w_1=1\otimes c$ and $w_2=1\otimes d$. Then
the $A_{\lambda}$-module structure is
\begin{align*}
&g\cdot w_1=g(1\otimes c)=1\otimes g\cdot c=1\otimes (-c)=-w_1,\\
&g\cdot w_2=g(1\otimes d)=1\otimes g\cdot d=1\otimes d=w_2,\\
&h\cdot w_1=h(1\otimes c)=1\otimes h\cdot c=1\otimes(-d)=-w_2,\\
&h\cdot w_2=h(1\otimes d)=1\otimes h\cdot d=1\otimes c=w_1.
\end{align*}
The comodule structure is
\begin{align*}
\delta(w_1)=\delta(1\otimes c)=1\otimes (1\otimes c)=1\otimes w_1,\quad
\delta(w_2)=\delta(1\otimes d)=1\otimes (1\otimes d)=1\otimes w_2.
\end{align*}
By direct computation, the braiding structure is
\begin{align*}
c(w_1\otimes w_1)
 &=w_1\otimes w_1,\quad
 c(w_1\otimes w_2)
=w_2\otimes w_1,\\
c(w_2\otimes w_1)
&=w_1\otimes w_2,\quad
c(w_2\otimes w_2)
=w_2\otimes w_2.
\end{align*}
Therefore, the braiding matrix is
$\left[
\begin{array}{cccc}
1&1\\
1&1
\end{array}
\right]$.

Hence $\Gkdim B(\Oo_1, S^{-}_{0})=2$ by \cite[Example 31]{A1}.

Consider
$\mathbbm{k}G\otimes_{\mathbbm{k}G}S_{\lambda}^{+}
=1\otimes \mathbbm{k}a$.
Let $w=1\otimes a$. Then the $A_{\lambda}$-module structure is
\begin{align*}
&g\cdot w=g\cdot(1\otimes a)
=1\otimes g\cdot a=1\otimes a=w,\\
&h\cdot w=h\cdot (1\otimes a)=1\otimes h\cdot a=1\otimes \dfrac{\lambda}{2}a=\dfrac{\lambda}{2}w.
\end{align*}
The comodule structure is
\begin{align*}
\delta(w)=\delta(1\otimes a)
=1\otimes (1\otimes a)=1\otimes w.
\end{align*}
The braiding structure is
\begin{align*}
c(w\otimes w)=w_{(-1)}\cdot w\otimes w_{(0)}=w\otimes w.
\end{align*}
Therefore, $\Gkdim B(\Oo_1, S^{+}_{\lambda})=1$.

Now we consider
$\mathbbm{k}G\otimes_{\mathbbm{k}G}S_{\lambda}^{-}
=1\otimes \mathbbm{k}c$.
Let $v=1\otimes c$. Then the $A_{\lambda}$-module structure is
\begin{align*}
&g\cdot v=g\cdot(1\otimes c)
=1\otimes g\cdot c=1\otimes (-c)=-v,\\
&h\cdot v=h\cdot (1\otimes c)=1\otimes h\cdot c=1\otimes \dfrac{\lambda}{2}c=\dfrac{\lambda}{2}v.
\end{align*}
The comodule structure is
\begin{align*}
\delta(v)=\delta(1\otimes c)=1\otimes (1\otimes c)=1\otimes v.
\end{align*}

The braiding structure is
\begin{align*}
c(v\otimes v)=v_{(-1)}\cdot v\otimes v_{(0)}=v\otimes v.
\end{align*}
Therefore, $\Gkdim B(\Oo_1, S^{-}_{\lambda})=1$.

\end{document}